\renewcommand*{\backref}[1]{}
\renewcommand*{\backrefalt}[4]{%
	\ifcase #1 (Not cited.)%
	\or        (p.\,#2)%
	\else      (pp.\,#2)%
	\fi}
\begin{document}

\newtheorem{counter}{Theorem}
\newcommand{\jed}{\mbox{\boldmath$1$}}
\newcommand{\To}{\longrightarrow}
\newcommand{\A}{\mathcal{A}}
\newcommand{\C}{\mathbb{C}}
\newcommand{\Cc}{\mathcal{C}}
\renewcommand{\S}{\mathcal{S}}
\newcommand{\F}{\mathcal{F}}
\newcommand{\R}{\mathbb{R}}
\newcommand{\E}{\mathbb{E}}
\newcommand{\1}{\mathbb{1}}
\renewcommand{\P}{\mathbb{P}}
\newcommand{\Z}{\mathbb{Z}}
\newcommand{\N}{\mathbb{N}}
\newcommand{\W}{\mathbb{W}}
\newcommand{\T}{\mathbb{T}}
\newcommand{\M}{\mathcal{M}}
\newcommand{\U}{\mathcal{U}}
\newcommand{\V}{\mathcal{V}}
\newcommand{\Q}{\mathcal{Q}}
\renewcommand{\S}{\mathcal{S}}
\newcommand{\B}{\mathcal{B}}
\newcommand{\ce}{\mathbb E}
\newcommand{\I}{\mathcal{I}}
\newcommand{\setdef}{\stackrel {\rm {def}}{=}}
\newcommand{\rea}{\text{Re}}
\newcommand{\ds}{\displaystyle}
\newcommand{\script}{\scriptstyle}
\newcommand{\tif}{\tilde f}
\newcommand{\tig}{\tilde g}
\newtheorem*{thank}{\ \ \ \textbf{Acknowledgment}}
\newcounter{tictac}
\newenvironment{fleuveA}{
	\begin{list}{$(\emph{\textbf{A\arabic{tictac}}})$}{\usecounter{tictac}
			\leftmargin 1cm\labelwidth 2em}}{\end{list}}
\def\1{\,\rlap{\mbox{\small\rm 1}}\kern.15em 1}
\def\ind#1{\1_{#1}}
\def\build#1_#2^#3{\mathrel{\mathop{\kern 0pt#1}\limits_{#2}^{#3}}}
\def\tend#1#2{\build\hbox to 12mm{\rightarrowfill}_{#1\rightarrow #2}^{ }}
\def\cor#1{\build\longlefrightarrow_{}^{#1}}
\def\tendn{\tend{n}{\infty}}
\def\converge#1#2#3#4{\build\hbox to
	#1mm{\rightarrowfill}_{#2\rightarrow #3}^{\hbox{\scriptsize #4}}}


\newcommand{\printdate}{\today}
\theoremstyle{definition}

\newtheorem{thm}{Theorem}[section]
\newtheorem{prop}[thm]{Proposition}
\newtheorem{lem}[thm]{Lemma}
\newtheorem{defn}[thm]{Definition}
\newtheorem{notation}[thm]{Notation}
\newtheorem{example}[thm]{Example}
\newtheorem{conj}[thm]{Conjecture}
\newtheorem{prob}[thm]{Problem}
\newtheorem{Prop}[thm]{Proposition}
\newtheorem{que}[thm]{Question}
\newtheorem{rem}[thm]{Remark}
\newtheorem{rems}[thm]{Remarks}

\newtheorem{Cor}[thm]{Corollary}
\newtheorem{thmnonumber}{Theorem}
\newtheorem{fact}[thmnonumber]{Fact}
\newcommand{\cb}{{\mathcal B}}
\newcommand{\ca}{{\mathcal A}}
\newcommand{\cc}{{\mathcal C}}
\newcommand{\cd}{{\mathcal D}}
\newcommand{\cf}{{\mathcal F}}
\newcommand{\ch}{{\mathcal H}}
\newcommand{\cm}{{\mathcal M}}
\newcommand{\hatca}{\widehat{\mathcal A}}
\newcommand{\hatcb}{\widehat{\mathcal B}}
\newcommand{\hatcc}{\widehat{\mathcal C}}
\newcommand{\hatcd}{\widehat{\mathcal D}}
\newcommand{\ot}{\otimes}
\newcommand{\la}{\lambda}
\newcommand{\tf}{T_{\varphi}}
\newcommand{\va}{\varphi}
\newcommand{\vep}{\varepsilon}
\newcommand{\ov}{\overline}
\newcommand{\un}{\underline}
\newcommand{\ux}{\underline{X}}
\newcommand{\uy}{\underline{Y}}
\newcommand{\hatuy}{\widehat{\underline{Y}}}
\newcommand{\hatux}{\widehat{\underline{X}}}
\newcommand{\uz}{\underline{Z}}
\newcommand{\ka}{\mathcal{K}}
\newcommand{\si}{\sigma}

\newcommand{\bfu}{\boldsymbol{f}}
\newcommand{\bh}{\boldsymbol{h}}
\newcommand{\bb}{\boldsymbol{b}}
\newcommand{\bc}{\boldsymbol{c}}
\newcommand{\bx}{\boldsymbol{x}}
\newcommand{\aaa}{\boldsymbol{a}}
\newcommand{\bmu}{\bm \mu}
\newcommand{\bmnu}{\bm \nu}
\newcommand{\bml}{\bm \lambda}
\newcommand{\bun}{\boldsymbol{1}}

\newtheorem*{xrem}{Remark}
\newtheorem*{xconj}{Conjecture}
\newtheorem*{xques}{Question}
\newcommand{\intBohr}{\mathop{\mathlarger{\mathlarger{\mathlarger{\landupint}}}}}
\newcommand{\Prod}{\mathop{\mathlarger{\mathlarger{\mathlarger{\prod}}}}}
\newcommand{\Sum}{\mathlarger{\mathlarger{\sum}}}
\def\Max#1{\mathlarger{\max_{#1}}}
\def\Min#1{\mathlarger{\min_{#1}}}
\def\Sup#1{\mathlarger{\sup_{#1}}}
\def\En#1{(\lfloor #1 \rfloor)}

\def \d{{\rm d}}
\def \f{\overline{f}}
\def \g{\overline{g}}
\def \e{\varepsilon}
\def\InvFourier#1{\stackrel{\vee}{#1}}
\def\Invstar#1{\widetilde{#1}}
\def \Cov{\text{Cov}}
\def\Pr{\mathbb P}
\newcommand{\beq}{\begin{equation}}
\newcommand{\eeq}{\end{equation}}
\newcommand{\xbm}{(X,{\mathcal B},\mu)}
\newcommand{\xbmt}{(X,{\mathcal B},\mu,T)}
\newcommand{\tilxbm}{(\tilde{X},\tilde{{\cal B}},\tilde{\mu})}
\newcommand{\zdr}{(Z,{\mathcal D},\rho)}
\newcommand{\ycn}{(Y,{\mathcal C},\nu)}
\title[On the homogeneous ergodic bilinear $\cdots$"]{ On the homogeneous ergodic bilinear averages with $1$-bounded multiplicative weights$^{*}$}
\author{E. H. \lowercase{el} Abdalaoui\\
	$^{*}$  D\lowercase{edicated to} P\lowercase{rofessor}  M\lowercase{ichael} L\lowercase{in  on the occasion of his 86th birthday.}}

\subjclass[2010]{Primary: 37A30;  Secondary: 28D05, 5D10, 11B30, 11N37, 37A45}

\keywords{ Fürstenberg's  multilinear ergodic averages, Multiplicative function, , Birkhoff ergodic theorem,
	Bourgain's double recurrence theorem, Zhan's estimation, Davenport-Hua's estimation, Bourgain-Sarnak's ergodic theorem.\\
	\printdate }






\medskip
{\renewcommand\abstractname{Abstract}
\begin{abstract} We establish a generalization of Bourgain double recurrence theorem  and ergodic Bourgain-Sarnak's theorem by proving that for any aperiodic $1$-bounded multiplicative function  $\boldsymbol{\nu}$, for any map $T$  acting on a probability space $(X,\A,\mu)$, for any integers $a,b$ ,  for any $f,g \in L^2(X)$, and for almost all $x \in X$, we have	
	\[
	\frac{1}{N}
	\sum_{n=1}^{N}\boldsymbol{\nu}(n)
	f(T^{a n}x)g(T^{bn}x) \tend{N}{+\infty} 0.
	\]
We further present with proof the key ingredients of Bourgain's proof of his double recurrence theorem.
\end{abstract}}



\maketitle
\section{Introduction}

The purpose of this paper is to consider the homogeneous ergodic bilinear averages with arithmetical weight. But, we focus only on the case when the weight is given by $1$-bounded multiplicative function. This will give an another generalization of Bourgain's double recurrence theorem (BDRT) \cite{Bourgain-D} and  Bourgain-Sarnak's ergodic theorem \cite{Sarnak}. For a simple proof of this later theorem, we refer to \cite{elabdalDCDS}. But, we stress that our proof follows that of Bourgain. As a consequence, we present with proof its key ingredients. 

A careful extension and understanding of Bourgain's proof gives that for any invertible measure preserving transformation $T$, acting on a probability space $(X, \B, \mu)$, for any $f \in L^{r}(X,\mu)$ , $g \in L^{r'}(X,\mu)$ such that $\frac{1}{r}+\frac{1}{r'}= 1$, for any non-constant polynomials $P(n), n \in \Z$, taking integer values, and for almost all $x \in X$, we have,
\begin{align*}
\frac{1}{N}\sum_{n=1}^{N}f(T^{P(n)}x) g(T^{n}x).
\end{align*}
 (See \cite{TMT}, see also Lemma \ref{WW-Gowers} (equation \eqref{WW-Nil}) and Lemma \ref{Bourgain-F}).\\
 
The study of the almost everywhere convergence of the homogeneous ergodic bilinear averages with weight  was initiated by  I. Assani, D. Duncan, and R. Moore in \cite{AssaniDM}. Therein, the authors proved a Wiener-Wintner version of BDRT, that is, the exponential sequences $(e^{2\pi i nt})_{n \in \Z}$ are good weight for the homogeneous ergodic bilinear averages. 
Subsequently, I. Assani and R. Moore  showed that the polynomials exponential sequences $\big(e^{2\pi i P(n)}\big)_{n \in \Z}$  are also uniformly good weights for the homogeneous ergodic bilinear averages
\cite{AssaniM}. One year later, I. Assani \cite{Assani-Nil} and P. Zorin-Kranich \cite{Zorich} proved independently that the nilsequences are
uniformly good weights for the homogeneous ergodic bilinear averages. Their proofs depend also on Bourgain's theorem. Very recently, the author extended Bourgain-Sarnak theorem by proving that the M\"{o}bius and  Liouville functions are a good weight for the homogeneous ergodic bilinear averages \cite{IJL}. But there is a gap in the proof. Here, we will generalize that theorem to the bounded multiplicative function and we will thus fill the gap. Let us point out that therein, the author extended Assani's theorem \cite{Assani} by proving , independently of the gap, that the M\"{o}bius and Liouville functions are a good weight for the homogeneous F\"{u}rstenberg's  multilinear ergodic averages provided that the map is  weakly mixing and its restriction to its Pinsker algebra has singular spectrum.  

Here, our proof follows closely  Bourgain's proof of his double recurrence theorem which in turn is based on his method and the Calder\'{o}n transference principal. For a nice account on this method, we refer to \cite{Nair}, \cite{Thouvenot}. Despite the fact that the classical spectral analysis can not be applied to study  F\"{u}rstenberg's multilinear ergodic average, some kind of spectral analysis tools based on the Fourier transform can be adapted for its studies. This is accomplished by applying Cald\'eron principal and the discrete Fourier transform which can be seen as a spectral isomorphism. We notice that in this setting, the dynamics on the diagonal in F\"{u}rstenberg's ergodic average is interpreted as an operation on the kernels. The kernel is an average mass on the particles $X=\big\{x_1,\cdots,x_N \big\}$ and the operation act on the diagonal of $X \times X$  and it is given as a kernel the Dirac mass  $(x_i,x_i)$, $i=1,\cdots,N$. We emphasize that the product on the observable functions in $\ell^2$ is interpreted as a convolution. We thus believe that, as in Bourgain's proof, the harmonic analyis methods can be useful to address the problem of the convergence almost everywhere of F\"{u}rstenberg's multilinear ergodic average. This approach is raised in \cite{elabdal-poly}. 

 We recall that the problem of the convergence almost everywhere (a.e.) of the ergodic  multilinear averages was initiated by F\"{u}rstenberg in \cite[Question 1 p.96]{Fbook}. Bourgain answered that question by proving the following:

Let $T$ be a map acting on a probability space $(X,\mathcal{A},\nu)$, and $a,b \in\mathbb{Z}$,
then for any $f,g \in L^\infty(X)$, the averages

\[
\frac{1}{N}
\sum_{n=1}^{N}
f(T^{an}x)g(T^{bn}x)
\]
converge for almost every $x$.
Here, we will state and formulate the fundamental ingredients in Bourgain's proof of his theorem. We will also present the proof of the main ingredients. In fact, our proof is essentially based on those ingredients.
 
\section{Set up and Tools}
An arithmetical function is a complex-valued function defined on the positive
integers. It is also called a number-theoretic function. The function $\bmnu$  is called multiplicative if $\bmnu$ is not identically zero and if
\[ \bmnu(nm)=\bmnu(n)\bmnu(m) \textrm{~~whenever~~} (n,m)=1.\]
$(n,m)$ stand for the greatest common divisor of $n$ and $m$.

\noindent $\bmnu$ is called completely multiplicative if we also have
$$\bmnu(nm)=\bmnu(n)\bmnu(m),~~~~~~~\forall n,m \in \N.$$

An easy example of function which is multiplicative but not completely multiplicative can be given by the function $f(n)=\lfloor \sqrt{n}\rfloor-\lfloor\sqrt{n-1}\rfloor$, where, 
as customary, $\lfloor~.~\rfloor$ is the integer part. 

For the bounded completely multiplicative,  the Liouville function is the famous example. This function  is defined for the positive integers $n$ by
$$
\bml(n)=(-1)^{\Omega(n)},
$$
where $\Omega(n)$ is the length of the word $n$ is the alphabet of prime, that is, $\Omega(n)$ is the number of prime factors of $n$ counted with multiplicities.\\
Its allies is the M\"{o}bius function which is only multiplicative. It is given by
\begin{equation}\label{Mobius}
\bmu(n)= \begin{cases}
1 {\rm {~if~}} n=1; \\
\bml(n)  {\rm {~if~}} n
{\rm {~is~the~product~of~}} r {\rm {~distinct~primes}}; \\
0  {\rm {~if~not}}
\end{cases}
\end{equation}
These two functions are of great importance in number theory since the Prime Number Theorem is equivalent to \begin{equation}\label{E:la}
\sum_{n\leq N}\bml(n)={\rm o}(N)=\sum_{n\leq N}\bmu(n).
\end{equation}
Furthermore, there is a connection between these two functions and Riemann zeta function, namely
$$
\frac1{\zeta(s)}=\sum_{n=1}^{\infty}\frac{\bmu(n)}{n^s} \text{ for any }s\in\mathbb{C}\text{ with }\rea(s)>1.
$$
Moreover, Littlewood proved that the estimate
\[
\left|\ds \sum_{n=1}^{x}\bmu(n)\right|=O\left(x^{\frac12+\varepsilon}\right)\qquad
{\rm as} \quad  x \longrightarrow +\infty,\quad \forall \varepsilon >0
\]
is equivalent to the Riemann Hypothesis (RH) (\cite[pp.315]{Titchmarsh}).

We recall that the proof of Sarnak-Bourgain theorem \cite{Sarnak} is based on the following Davenport-Hua's estimation \cite{Da},  \cite[Theorem 10.]{Hua}: for each $A>0$, for any $k \geq 1$, we have
\begin{equation}\label{vin}
\max_{z \in \T}\left|\displaystyle\sum_{n \leq N}z^{n^k}\bmu(n)\right|\leq C_A\frac{N}{\log^{A}N}\text{ for some }C_A>0.
\end{equation}
We refer to \cite{elabdalDCDS} and \cite{Cuny-Weber} for this proof. 



\noindent The inequality \eqref{vin}  can be established also for the Liouville function by applying carefully the following identity:

\[\bml(n)=\sum_{d:d^2 | n}\bmu\Big(\frac{n}{d^2}\Big).\]
We further recall that the multiplicative function $\bmnu$ is said to be aperiodic if

$$\frac{1}{N}\sum_{n=1}^{N}\bmnu(an+b) \tend{N}{+\infty}0,$$
\noindent for any $(a, b) \in \N^* \times \N$.
By Davenport's theorem \cite{Da} and Bateman-Chowla's theorem \cite{Batman-Cho}, the M\"{o}bius and Liouville functions are aperiodic.

\section{Wiener-Wintner's version of Daboussi-Katai-Bourgain-Sarnak-Ziegler's (DKBSZ) criterion (WWDKBSZ)}
For the proof of our main result, we need a straightforward generalization \`a la Wiener-Wintner of the so-called DKBSZ criterion. This criterion is based on the results of Bourgain-Sarnak-Ziegler \cite[Theorem 2]{BSZ}, and Katai \cite{Katai} , which in turn  develop some ideas of Daboussi (presented in \cite{Daboussi}, \cite{DaboussiII}).  we state now the WWDKBSZ criterion in the following form.

\begin{Prop}[WWKBSZ criterion]\label{WWKBSZ}
	Let $(X,\ca,\mu)$ be a Lebesgue probability space and $T$ be an invertible measure preserving transformation. 
	Let $\bmnu$ be a multiplicative function, $f$ be in $L^{\infty}$ with $\|f\|_{\infty} \leq 1$ and
	$\varepsilon>0$. Suppose that for almost all point $x\in X$ and for all different prime numbers $p$ and $q$ less than $\exp(1/\varepsilon)$, we have
	\begin{equation}
	\label{eq:f_k limit}
	\limsup_{N\to\infty}\sup_{t}\left| \dfrac{1}{N}\sum_{n=1}^N e^{2 \pi i n (p-q) t}f (T^{pn}x) f(T^{qn}x) \right| < \varepsilon,
	\end{equation}
	then, for almost all $x \in X$, we have
	\begin{equation}
	\label{eq:almost orthogonality for f_k}
	\limsup_{N\to\infty} \sup_{t}\left|\dfrac{1}{N} \sum_{n=1}^N \bmnu(n) e^{2 \pi i n t}f(T^{n}x) \right| < 2\sqrt{\varepsilon\log1/\varepsilon}.
	\end{equation}
\end{Prop}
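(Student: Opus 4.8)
The plan is to follow the classical Daboussi–Kátai–Bourgain–Sarnak–Ziegler scheme, but carrying the extra exponential twist $e^{2\pi i nt}$ uniformly in $t$ throughout. Fix $x$ in the full-measure set where hypothesis \eqref{eq:f_k limit} holds, and write $a_n = f(T^n x)$, so that $|a_n|\le 1$. The quantity we must bound is $\sup_t \big|\frac1N\sum_{n=1}^N \bmnu(n)e^{2\pi i nt}a_n\big|$. The first step is the standard reduction: let $\mathcal P$ be the set of primes $p < \exp(1/\varepsilon)$; by Mertens' theorem $\sum_{p\in\mathcal P} 1/p \ge \log(1/\varepsilon) - O(1)$, so one can select a large enough family of primes. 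For each such prime $p$, multiplicativity of $\bmnu$ gives $\bmnu(pn) = \bmnu(p)\bmnu(n)$ whenever $p\nmid n$; grouping the sum $\sum_{n\le N}\bmnu(n)e^{2\pi i nt}a_n$ according to the largest prime-power divisor, or more simply by writing $n = pm$, we relate $\frac1N\sum_{n\le N}\bmnu(n)e^{2\pi i nt}a_n$ to a $p$-weighted average of $\frac{1}{N/p}\sum_{m\le N/p}\bmnu(m)e^{2\pi i pmt}a_{pm}$ up to an error of size $O(1/p)$ from the terms with $p\mid n$.

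The second and central step is the Turán–Kubilius / Cauchy–Schwarz step applied to the $L^2$ (in this ergodic setting, $\ell^2$) norm over $n\le N$. One bounds
\[
\Big|\frac1N\sum_{n\le N}\bmnu(n)e^{2\pi i nt}a_n\Big|^2
\]
by, after the prime reduction, an average over pairs of distinct primes $p,q\in\mathcal P$ of cross terms of the shape $\frac1N\sum_{n\le N/\max(p,q)} e^{2\pi i n(p-q)t} a_{pn}\overline{a_{qn}}$, together with diagonal terms of total size $O\big(\sum_{p}1/p\big)^{-1} = O(1/\log(1/\varepsilon))$ coming from $p=q$ and from the lower-order error terms. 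This is exactly where the hypothesis \eqref{eq:f_k limit} enters: each off-diagonal term is, for $N$ large and uniformly in $t$, bounded by $\varepsilon$. Since $a_{pn} = f(T^{pn}x)$ and $a_{qn} = f(T^{qn}x)$, the inner average is precisely the object controlled in \eqref{eq:f_k limit} (the $\sup_t$ there absorbs the $\sup_t$ we need here, as $(p-q)t$ still ranges over all of $\T$). Combining, one gets $\limsup_N \sup_t |\cdots|^2 \le C\varepsilon + C/\log(1/\varepsilon)$ for absolute constants; optimizing the number of primes used (Mertens gives $\sum_{p\in\mathcal P}1/p \asymp \log(1/\varepsilon)$, which is the right order to balance against the $\varepsilon$ term) yields a bound of order $\varepsilon\log(1/\varepsilon)$, whence $\limsup_N\sup_t|\cdots| < 2\sqrt{\varepsilon\log(1/\varepsilon)}$ after tracking the constants.

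The main obstacle, and the point requiring genuine care rather than routine bookkeeping, is making the prime-reduction and Cauchy–Schwarz steps uniform in $t$ and valid almost surely in $x$ simultaneously: the hypothesis \eqref{eq:f_k limit} is an almost-everywhere statement for each fixed pair $(p,q)$, and since there are only finitely many primes below $\exp(1/\varepsilon)$ one intersects finitely many full-measure sets, which is harmless — but one must be sure the $\limsup$ in $N$ and the $\sup$ in $t$ commute appropriately, which they do because the bound is a uniform-in-$t$ estimate on the $\limsup$. A secondary technical point is the treatment of the error terms with $p \mid n$ or $p^2 \mid n$: these contribute $O(\sum_p 1/p \cdot \frac1N \sum_{p\mid n,\, n\le N}1) = O(\sum_p 1/p^2) + o(1)$ after normalization, which is absorbed into the $1/\log(1/\varepsilon)$ budget. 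I would also remark that the argument only uses $\bmnu$ multiplicative and $1$-bounded, not completely multiplicative, since we only ever factor out a single prime.
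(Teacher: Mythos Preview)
Your proposal is correct and follows essentially the same route as the paper. The paper's proof is a one-line reference: it says the argument is verbatim that of Bourgain--Sarnak--Ziegler \cite[Theorem~2]{BSZ}, with the single modification that at the place corresponding to their equation~(2.7) one invokes the elementary inequality $\sup(F+G)\le\sup F+\sup G$ to push the $\sup_t$ through. Your outline is precisely an unpacking of that BSZ argument (prime sieve, multiplicativity to factor out $\bmnu(p)$, Cauchy--Schwarz over the prime-indexed family, off-diagonal terms controlled by the hypothesis, diagonal and $p\mid n$ terms absorbed by Mertens), together with the same observation that the $\sup_t$ commutes with the finitely many steps because only finitely many primes are involved and the supremum is subadditive.
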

\begin{proof}The proof is verbatim the same as  \cite[Theorem 2]{BSZ},
	except that at the equation (2.7) one need to apply the following elementary inequality: for any two bounded positive functions $F$ and $G$, we have
	$$\sup(F(x)+G(x)) \leq \sup(F(x))+\sup(G(x)).$$
\end{proof}
\subsection{Gowers Norms} 
Gowers norms are a great tools  in additive number theory, combinatorics and ergodic theory. We present its definition for the shift on the set of integers.

Let $d \geq 1$ and $C_d=\{0,1\}^d$. If $\bh \in \Z^d$ and $\bc \in C_d$, then 
$\bc.\bh=\sum_{i=1}^{d}c_ih_i.$  Let ${\big(f_{\bc}\big)}_{\bc \in C_d}$ be a family of bounded functions that are finitely supported, 
that is, for each $\bc \in C_d$, $f_{\bc}$ is in $L_c^{\infty}(\Z)$ the subspace of functions that are finitely supported.  The Gowers inner product is given by 
$$\Big\langle \big(f_{\bc}\big)\Big \rangle_{U^d(\Z)}=\sum_{\bx,\bh \in G^{d+1}}\prod_{\bc \in C_d}\Cc^{|\bc|}f_{\bc}(g+\bc.\bh)
,$$
where $|\bc|=\bc.\boldsymbol{1}$, $\boldsymbol{1}=(1,1,\cdots,1) \in C_d$ and $\Cc$ is the conjugate anti-linear operator.
If all $f_{\bc}$ are the same function $f$ then the Gowers norms of $f$ is defined by
$$\big|\big|f\big|\big|_{U^d(\Z)}^{2^d}=\Big\langle \big(f\big)\Big \rangle_{U^d(\Z)}.$$
The fact that $\big|\big|.\big|\big|_{U^d(\Z)}$ is a norm for $d \geq 2$ follows from the following generalization of Cauchy-Bunyakovski-Schwarz inequality for the Gowers inner product.
\begin{prop}(Cauchy-Bunyakovskii-Gowers-Schwarz inequality)\label{Tao}
	$$ \Big\langle \big(f_c\big)\Big \rangle_{U^d(\Z)} \leq \prod_{c \in C_q}\big|\big|f_c\big|\big|_{U^d(\Z)}.$$
\end{prop}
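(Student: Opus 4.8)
The plan is to deduce the inequality from the ordinary Cauchy--Schwarz inequality applied $d$ times, once in each of the $d$ ``edge directions'' $h_1,\dots,h_d$ of the discrete cube $C_d$, each application lowering by one the number of coordinates in which the family $(f_{\bc})$ genuinely varies. All sums below are finite, since the $f_{\bc}$ are finitely supported, so no convergence issue arises.

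First I would record the basic peeling identity. Fix a direction $i\in\{1,\dots,d\}$ and write $\bc=(\bc',c_i)$ with $\bc'\in C_{d-1}$ and $\bh=(\bh',h_i)$ with $\bh'=(h_j)_{j\ne i}$, so that $\bc\cdot\bh=\bc'\cdot\bh'+c_ih_i$ and $|(\bc',c_i)|=|\bc'|+c_i$. Splitting the product defining $\langle(f_{\bc})\rangle_{U^d(\Z)}$ according to $c_i\in\{0,1\}$, summing first over the base point $x$ and over $h_i$, and substituting $y=x+h_i$ (so that $\Cc^{|\bc'|+1}=\Cc\,\Cc^{|\bc'|}$ turns the $c_i=1$ block into a conjugate and the two blocks decouple), one rewrites the inner product as
\[
\langle(f_{\bc})\rangle_{U^d(\Z)}=\sum_{\bh'}A_0(\bh')\,\overline{A_1(\bh')},\qquad
A_j(\bh')=\sum_{x}\prod_{\bc'\in C_{d-1}}\Cc^{|\bc'|}f_{(\bc',j)}(x+\bc'\cdot\bh').
\]
Cauchy--Schwarz in $\bh'$ then gives
\[
\big|\langle(f_{\bc})\rangle_{U^d(\Z)}\big|\le\Big(\sum_{\bh'}|A_0(\bh')|^2\Big)^{1/2}\Big(\sum_{\bh'}|A_1(\bh')|^2\Big)^{1/2}.
\]
The next step is to identify each factor: expanding $|A_j(\bh')|^2=A_j(\bh')\overline{A_j(\bh')}$ and reintroducing a fresh variable $h_i$ by writing the second base point as $x+h_i$, one checks directly that $\sum_{\bh'}|A_j(\bh')|^2=\langle(f^{(j)}_{\bc})\rangle_{U^d(\Z)}$, where $f^{(j)}_{\bc}$ is obtained from $(f_{\bc})$ by replacing, for every $\bc'\in C_{d-1}$, \emph{both} $f_{(\bc',0)}$ and $f_{(\bc',1)}$ by the single function $f_{(\bc',j)}$. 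Being a sum of squared moduli, this quantity is a non-negative real, so the square roots are legitimate, and one obtains $|\langle(f_{\bc})\rangle|\le\langle(f^{(0)}_{\bc})\rangle^{1/2}\langle(f^{(1)}_{\bc})\rangle^{1/2}$ with two new families, each independent of the $i$-th coordinate of $\bc$.

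Then I would iterate. Apply the same operation to $\langle(f^{(0)}_{\bc})\rangle$ and $\langle(f^{(1)}_{\bc})\rangle$ in a direction not yet used, and so on through all $d$ directions; at each stage the quantity being estimated is already a non-negative real (a sum of squares produced at the previous step), so $|\cdot|$ may be dropped before the next Cauchy--Schwarz. After all $d$ directions are exhausted, every surviving inner product has all $2^d$ of its functions equal to one fixed vertex function $f_{\bc^*}$, $\bc^*\in C_d$, and $\langle(f_{\bc^*},\dots,f_{\bc^*})\rangle_{U^d(\Z)}=\|f_{\bc^*}\|_{U^d(\Z)}^{2^d}$ by definition. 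Each vertex $\bc^*$ occurs exactly once, with exponent $2^{-d}$, so multiplying the $2^d$ bounds yields $\prod_{\bc\in C_d}\big(\|f_{\bc}\|_{U^d(\Z)}^{2^d}\big)^{2^{-d}}=\prod_{\bc\in C_d}\|f_{\bc}\|_{U^d(\Z)}$, as claimed.

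The only real difficulty is organisational bookkeeping: one must keep careful track of which ``face'' of the cube is overwritten at each step and verify that the two quantities produced by a given Cauchy--Schwarz are exactly the Gowers inner products of the asserted modified families (and, as a by-product, that $\langle(f)\rangle_{U^d(\Z)}\ge0$, so that $\|\cdot\|_{U^d(\Z)}$ is well defined). Once the peeling identity of the second paragraph is written down cleanly for an arbitrary coordinate, the rest is a straightforward finite induction on the number of coordinates in which the family still varies; there is no analytic content.
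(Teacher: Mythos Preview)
Your proof is correct and follows essentially the same approach as the paper. The paper simply states the key single-step inequality
\[
\Big\langle (f_{\bc})\Big\rangle_{U^d(\Z)}\le \prod_{j=0,1}\Big|\Big\langle \big(f_{\pi_{i,j}(\bc)}\big)\Big\rangle_{U^d(\Z)}\Big|^{1/2},
\]
where $\pi_{i,j}(\bc)$ replaces the $i$-th coordinate of $\bc$ by $j$, says it is ``easy to check'' and then iterates; your peeling identity and Cauchy--Schwarz step are exactly the verification of this inequality, and your modified families $(f^{(j)}_{\bc})$ are precisely $(f_{\pi_{i,j}(\bc)})$.
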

The proof of Cauchy-Bunyakovskii-Gowers-Schwarz inequality can be obtain easily by applying inductively Cauchy-Bunyakovskii-Schwarz inequality. Indeed, it easy to check that we have
$$ \Big\langle \big(f_c\big)\Big \rangle_{U^d(\Z)} \leq \prod_{j=0,1}
\Big|\langle \big(f_{\pi_{i,j}}(\bc)\big)\Big \rangle_{U^d(G)}\Big|^{\frac{1}{2}},$$
For all $i=1,\cdots, d-1$, where $\pi_{i,j}(\bc) \in C_d$ is formed from $\bc$ by replacing  the $i^{\rm{th}}$ coordinates with $j$. 
Iterated this, we obtain the complete proof of Proposition \ref{Tao}.

Combining Cauchy-Bunyakovskii-Gowers-Schwarz inequality with the binomial formula and the multilinearity of the Gowers inner
product one can easily check that the triangle inequality for $\big\|.\big\|_{U^d(G)}$ holds.  We further have
\begin{align*}
\big\|f\big\|_{U^d(\Z)} \leq \big\|f\big\|_{U^{d+1}(\Z)},
\end{align*}
by applying Cauchy-Bunyakovskii-Gowers-Schwarz inequality with $f_{\bc}=1$ if $\bc_d=0$ and $f_{\bc}=1$ if $\bc_d=1$. The Gowers norms are also invariant under the shift and conjugacy.

We define also the discrete derivative of function $f :\Z \rightarrow \C$ by putting
$$\partial_h(f)=f(x+h).\overline{f}(x),$$
for all $h,x \in \Z$. We can thus write the Gowers norm of $f$ as follows

\begin{eqnarray*}
\big\|f\big\|_{U^d(\Z)}^{2^d}=\int_{G^{d+1}}\partial_{h_1}\partial_{h_2}\cdots\partial_{h_d}(f)(x) d\bh dx.
\end{eqnarray*}
If further $f$ take values on $\R/\Z$ and 
$$\partial_{h_1}\partial_{h_2}\cdots\partial_{h_{d+1}}(f)(x)=0,$$ 
for all $h_1,\cdots,h_{d+1},x \in G$, then $f$ is said to be a polynomial function of degree at most $d$. The degree of $f$ is denoted by
$d^{\circ}(f)$.

According to this it is easy to see that for any function $f$ and any polynomial function $\phi$ of degree at most $d$, we have
$$\big|\big|e^{2 \pi i \phi(x)}f(x)\big|\big|_{U^d(\Z)}=\big|\big|f\big|\big|_{U^d(\Z)}.$$
Therefore
\begin{eqnarray}\label{vander}
\sup_{\phi,~~d^{\circ}(\phi)\leq d}\Big|\int_G e^{2 \pi i \phi(x)}f(x)dx\Big| \leq \big|\big|f\big|\big|_{U^d(\Z)}.
\end{eqnarray}

In application and here we need to define the Gowers norms for a bounded function defined on $\big\{1,\cdots,N\big\}$. For that, 
if $f$ is a bounded function defined on $\big\{1,\cdots,N-1\big\}$, we put
\[\big|\big|f\big|\big|_{U^d[N]}=
\frac{\big|\big|\widetilde{f}\big|\big|_{U^d(\Z/2^d.N\Z)}}{\big|\big|\1_{[N]}\big|\big|_{U^d(\Z/2^d.N\Z)}},\] 
where $\widetilde{f}=f(x).\1_{[N]},$ $x \in \Z/2^d.N\Z$, $\1_{[N]}$ is the indicator function of $\big\{1,\cdots,N\big\}$.
For more details on Gowers norms, we refer to \cite{TaoHfourier},\cite{TaoAdditive}.

The sequence $f$ is said to have a small Gowers norms if for any $d \geq 1$, 
\[\big|\big|f\big|\big|_{U^d[N]} \tend{N}{+\infty}0. \]
An example of sequences of small Gowers norms that we shall need here is given by Thue-Morse and Rudin-Shapiro sequences.  This result is due J. Konieczny \cite{Konieczny}.

The notion of Gowers norms can be extended to the dynamical systems as follows.

Let $T$ is be an ergodic measure preserving transformation on $X$. Then,  for any $k \geq 1$,
the Gowers seminorms on $L^{\infty}(X)$ are defined inductively as follows
$$\||f|\|_1=\Big|\int f d\mu\Big|;$$
$$\||f|\|_{k+1}^{2^{k+1}}=\lim\frac{1}{H}\sum_{l=1}^{H}\||\overline{f}.f\circ T^l|\|_{k}^{2^{k}}.$$

For each $k\geq 1$, the seminorm $\||.|\|_{k}$ is well defined. Those norms are also called Gowers-Host-Kra's seminorms. For details, we refer the reader to \cite{Host-K2}. Notice that
the definitions of Gowers seminorms can be also easily extended to non-ergodic maps.

The importance of the Gowers-Host-Kra's seminorms in the study of the nonconventional multiple ergodic averages is due to the existence of $T$-invariant sub-$\sigma$-algebra $\mathcal{Z}_{k-1}$ of $X$ that satisfies
$$\E(f|\mathcal{Z}_{k-1})=0 \Longleftrightarrow \||f|\|_{k}=0.$$
This was proved by Host and Kra in \cite{Host-K2}. The existence of the factors $\mathcal{Z}_{k}$ was established by Host and Kra and independently by Ziegler in \cite{Ziegler}.

At this point, we are able to state the second main ingredient need it in our proof. It is due to Assani-Dacuna and Moore \cite{AssaniDM}. This result extend à la Wiener-Wintner Bourgain's double recurrence theorem (WWBDRT).  
\begin{Prop}\label{Assani-Moore} Let $(X,\mu,T)$ be an ergodic	
	dynamical system and $a,b$ distinct non-zero integers. Then,
	for any $f,g \in L^{\infty}$ with $\min\{\|f_1\|_{U^{2}},\|g\|_{U^{2}}\}=0$,  there exist a measurable set $X'$ of full measure such
	,for any $x \in X'$,  we have
	$$ {\sup_{|z|=1}}\Big|{\frac1{N}}\sum_{n=0}^{N-1}z^nf(T^{an}x)g(T^{bn}x) \Big|
	\tend{N}{+\infty}0.$$
\end{Prop}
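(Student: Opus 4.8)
The plan is the standard scheme for Wiener--Wintner refinements of pointwise theorems: use van der Corput's inequality to remove the weight $z$, invoke Bourgain's double recurrence theorem (BDRT) for the resulting unweighted bilinear averages, and then control those via a characteristic--factor estimate. Assume $\|g\|_{U^{2}}=0$, i.e.\ $\E(g\mid\mathcal Z_{1})=0$ with $\mathcal Z_{1}=\mathcal K$ the Kronecker factor of $(X,\mu,T)$ (the case $\|f\|_{U^{2}}=0$ is symmetric); normalise $\|f\|_{\infty},\|g\|_{\infty}\le1$, and put $M_{N}(x,z)=\frac1N\sum_{n=0}^{N-1}z^{n}f(T^{an}x)g(T^{bn}x)$ and $\partial_{m}\phi=(\phi\circ T^{m})\,\overline{\phi}$. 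Applying van der Corput to $u_{n}=z^{n}f(T^{an}x)g(T^{bn}x)$ and using $u_{n+h}\overline{u_{n}}=z^{h}(\partial_{ah}f)(T^{an}x)(\partial_{bh}g)(T^{bn}x)$ with $|z^{h}|=1$ --- so the modulus may be passed through the inner sum and the supremum over $z$ taken for free --- one obtains, for an absolute constant $C$ and all $1\le H\le N$,
\[
\sup_{|z|=1}|M_{N}(x,z)|^{2}\ \le\ \frac{C}{H}+\frac{CH}{N}+\frac{C}{H}\sum_{h=1}^{H}\Bigl|\tfrac1N\textstyle\sum_{n=0}^{N-1}(\partial_{ah}f)(T^{an}x)(\partial_{bh}g)(T^{bn}x)\Bigr|.
\]

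Since $\partial_{ah}f,\partial_{bh}g\in L^{\infty}$, BDRT gives, for each fixed $h$, the a.e.\ and $L^{2}$ convergence $\frac1N\sum_{n<N}(\partial_{ah}f)(T^{an}x)(\partial_{bh}g)(T^{bn}x)\to D_{h}(x)$; intersecting over $h\in\N$ and letting $N\to\infty$ with $H$ fixed, $\limsup_{N}\sup_{|z|=1}|M_{N}(x,z)|^{2}\le\frac{C}{H}+\frac{C}{H}\sum_{h=1}^{H}|D_{h}(x)|$ for every $H$. It therefore suffices to prove
\[
(\star)\qquad \frac1H\sum_{h=1}^{H}\|D_{h}\|_{L^{2}(X)}^{2}\ \longrightarrow\ 0\qquad(H\to\infty);
\]
indeed, granting $(\star)$, along a subsequence $\frac1{H_{j}}\sum_{h\le H_{j}}|D_{h}(x)|^{2}\to0$ a.e., hence $\frac1{H_{j}}\sum_{h\le H_{j}}|D_{h}(x)|\to0$ a.e.\ by Cauchy--Schwarz, and the previous display forces $\sup_{|z|=1}|M_{N}(x,z)|\to0$ a.e., which is the assertion.

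To prove $(\star)$ --- this is where the hypothesis $\|g\|_{U^{2}}=0$ is essential, and where the main difficulty lies --- I would use that the Kronecker factor is characteristic for the bilinear average $\frac1N\sum\phi(T^{an}\cdot)\psi(T^{bn}\cdot)$ (Host--Kra): realising the $T$-eigenfunctions as unitary characters $(e_{\lambda})$ of the compact group $\mathcal K$ and writing $c_{\lambda}(\psi)=\langle\psi,e_{\lambda}\rangle$, one has $D_{h}=\sum_{\lambda,\mu:\lambda^{a}\mu^{b}=1}c_{\lambda}(\partial_{ah}f)\,c_{\mu}(\partial_{bh}g)\,e_{\lambda}e_{\mu}$, so $\|D_{h}\|_{2}^{2}\le C_{a,b}\sum_{\lambda,\mu:\lambda^{a}\mu^{b}=1}|c_{\lambda}(\partial_{ah}f)|^{2}|c_{\mu}(\partial_{bh}g)|^{2}$. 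For each fixed admissible pair $(\lambda,\mu)$, $c_{\mu}(\partial_{bh}g)=\langle g\circ T^{bh},\,g\,e_{\mu}\rangle$; since $\E(g\mid\mathcal K)=0$ the spectral measure of $g$, hence the cross-spectral measure of $g$ and $g\,e_{\mu}$, is continuous, so Wiener's lemma gives $\frac1H\sum_{h\le H}|c_{\mu}(\partial_{bh}g)|^{2}\to0$; thus every term of the double sum has vanishing $h$-average. The main obstacle is to upgrade this termwise vanishing to $(\star)$ itself: the sum is over infinitely many pairs, and the arithmetic constraint $\lambda^{a}\mu^{b}=1$ must be coordinated with the spectral decay in $h$ --- it is precisely this coupling (together with the drift in $h$ of the eigenvalues carried by $\partial_{bh}g$) that keeps $(\star)$ from collapsing to a \emph{false} $U^{2}$-statement about $\|\E(\partial_{bh}g\mid\mathcal K)\|_{2}$. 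Equivalently, expanding $\|D_{h}\|_{2}^{2}$ via BDRT and moving the finite $h$-sum inside recasts $\frac1H\sum_{h}\|D_{h}\|_{2}^{2}$ in terms of the projection of $f\otimes\overline{f}\otimes g\otimes\overline{g}$ onto the invariant $\sigma$-algebra of $T^{a}\times T^{a}\times T^{b}\times T^{b}$ on $X^{4}$: the product eigenfunctions are annihilated because $\E(g\mid\mathcal K)=0$, and controlling the residual relative invariant functions --- i.e.\ that the Kronecker factor is characteristic for this four-fold off-diagonal average --- is the technical core of the Assani--Duncan--Moore analysis. Once $(\star)$ is established, the proof is complete.
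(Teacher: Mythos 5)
Your reduction is the right skeleton — van der Corput in $n$ to strip off the weight $z$, Bourgain's double recurrence theorem to produce the a.e.\ limits $D_h$ of the differenced averages, and the reduction of the conclusion to the Ces\`aro statement $(\star)$ — but the argument stops exactly at the step that carries all the content: $(\star)$ is asserted, not proved, and you yourself flag it as the technical core. Worse, the gap cannot be repaired under the stated hypothesis, because the proposition as written (with the $U^2$ seminorm) is false. Take the Anzai skew product $T(x,y)=(x+\alpha,\,y+x)$ on $\mathbb{T}^2$ with $\alpha$ irrational, $a=1$, $b=2$, $f(x,y)=e^{-2\pi i\,4y}$, $g(x,y)=e^{2\pi i y}$. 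Both $f$ and $g$ are orthogonal to the Kronecker factor (the base rotation), so $\|f\|_{U^{2}}=\|g\|_{U^{2}}=0$; yet, writing $u=(x,y)$ and using $T^{n}(x,y)=(x+n\alpha,\,y+nx+\tfrac{n(n-1)}{2}\alpha)$, one computes $f(T^{n}u)\,g(T^{2n}u)=e^{-6\pi i y}\,e^{2\pi i n(\alpha-2x)}$, so that with $z=e^{2\pi i(2x-\alpha)}$ the weighted average has modulus one: $\sup_{|z|=1}\bigl|\frac1N\sum_{n=0}^{N-1}z^{n}f(T^{n}u)g(T^{2n}u)\bigr|=1$ for every $N$ and every $u$. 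In the same example your $D_h$ has modulus $1$ for every $h$ (the derivatives $\partial_{ah}f$ and $\partial_{bh}g$ depend on $x$ only, i.e.\ are eigenfunction-like), so $(\star)$ itself fails: the coupling between the constraint $\lambda^{a}\mu^{b}=1$ and the drift in $h$ that you hoped would rescue a $U^{2}$ statement does not.

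For comparison with the paper: the paper offers no proof of this proposition at all — it is quoted from Assani--Duncan--Moore \cite{AssaniDM} — and in that paper the hypothesis is orthogonality to the Conze--Lesigne factor $\mathcal{Z}_{2}$, i.e.\ a $U^{3}$ condition, not $U^{2}$ (the transcription here, like the stray $\|f_1\|$, is inaccurate). This is consistent with your own estimates: your natural bound for $\frac1H\sum_{h\le H}\|D_h\|_{2}^{2}$ is a quantity of the type $\frac1H\sum_{h\le H}\|\E(\partial_{bh}g\mid\mathcal{K})\|_{2}^{2}$, which is controlled by $\|g\|_{U^{3}}$ and in general does not vanish when only $\|g\|_{U^{2}}=0$ — the weight $z^{n}$ on a bilinear average raises the complexity by one level. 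Under the correct $U^{3}$ hypothesis, your outline (van der Corput, BDRT for the derivatives, then Ces\`aro smallness in $h$ of the Kronecker projections of $\partial_{bh}g$) is essentially the Assani--Duncan--Moore strategy and the estimate you wrote down closes it; as stated, with the $U^{2}$ hypothesis, no proof can exist.
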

\noindent{}Proposition \ref{Assani-Moore} has been extended to the nilsequences independently by I. Assani \cite{Assani-Nil} and A. Zorin-Kranich\cite{Zorich}. We refer to \cite{Assani-Nil} or \cite{Zorich} for the definition of nilsequences.

\noindent{}We need also the following straightforward lemma.
\begin{lem}\label{WW-Gowers}Let $T$ is be an ergodic measure preserving transformation on $(X,\B,\mu)$ and
	$f \in L^\infty(X,\mu)$. Then
\begin{eqnarray}
	\int \limsup_{N \longrightarrow +\infty}\Big|{\frac1{N}}\sum_{n=0}^{N-1}z^nf(T^{n}x) \Big| d\mu(x)
\leq \|f\|_{U^{2}}.
\end{eqnarray}
\end{lem}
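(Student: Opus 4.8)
The plan is to split $f$ according to the Kronecker factor $\mathcal{Z}_1$ of $(X,\B,\mu,T)$. Write $f=f_1+f_2$ with $f_2=\E(f\mid\mathcal{Z}_1)$ and $f_1=f-f_2$; both lie in $L^\infty$. Since $\E(f_1\mid\mathcal{Z}_1)=0$, the Host--Kra characterization of $\||\cdot|\|_2$ recalled above gives $\|f_1\|_{U^{2}}=0$, and because $\||\cdot|\|_2$ is a seminorm this forces $\|f\|_{U^{2}}=\|f_2\|_{U^{2}}$. By linearity of the average and the triangle inequality in $\C$ it is enough to bound, for $|z|=1$ and a.e.\ $x$, the two $\limsup$'s coming from $f_1$ and from $f_2$ separately.

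For the $f_1$-part I would invoke Proposition~\ref{Assani-Moore} with $f_1$ in the role of its first function, the constant function $g\equiv1$ in the role of the second, and the distinct non-zero integers $a=1$, $b=2$. Since $\min\{\|f_1\|_{U^{2}},\|g\|_{U^{2}}\}=0$ and $g(T^{2n}x)\equiv1$, the conclusion of Proposition~\ref{Assani-Moore} reads exactly $\sup_{|z|=1}\big|\frac1N\sum_{n=0}^{N-1}z^n f_1(T^n x)\big|\To0$ for a.e.\ $x$, so this term does not contribute to the $\limsup$. (A self-contained alternative: apply van der Corput's inequality to $u_n=z^n f_1(T^n x)$, evaluate the correlations $\frac1N\sum_n\overline{f_1(T^n x)}f_1(T^{n+h}x)\to\int\overline{f_1}\,f_1\circ T^h\,d\mu$ by Birkhoff's theorem, and conclude by Cauchy--Schwarz in $h$ together with the very definition of $\||\cdot|\|_2$; this again drives the term to $0$.)

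For the $f_2$-part, ergodicity of $T$ implies that its point spectrum is simple and that its unit eigenfunctions $\phi_j$ ($\phi_j\circ T=\lambda_j\phi_j$) have modulus $1$ a.e.\ and form an orthonormal basis of $L^2(\mathcal{Z}_1)$; expand $f_2=\sum_j c_j\phi_j$ with $c_j=\langle f,\phi_j\rangle$, $\sum_j|c_j|^2<\infty$. Fix $z$ with $|z|=1$; by simplicity there is at most one index $j(z)$ with $\lambda_{j(z)}=\overline z$. Truncating $f_2=\sum_{j\le J}c_j\phi_j+R_J$ with $\|R_J\|_2\to0$, the finite part is handled by an elementary geometric-series computation, $\frac1N\sum_{n<N}z^n\big(\sum_{j\le J}c_j\phi_j\big)(T^nx)\To c_{j(z)}\phi_{j(z)}(x)$ when $j(z)\le J$ and $\To0$ otherwise, while the remainder is dominated by the ergodic maximal function, $\sup_N\big|\frac1N\sum_{n<N}z^n R_J(T^nx)\big|\le\sup_N\frac1N\sum_{n<N}|R_J|(T^nx)$, whose $L^2$-norm is $O(\|R_J\|_2)\to0$ by Wiener's maximal ergodic inequality. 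Passing to a subsequence in $J$ gives, for a.e.\ $x$ and every $|z|=1$, $\limsup_N\big|\frac1N\sum_{n<N}z^n f_2(T^nx)\big|\le|c_{j(z)}|$ (read as $0$ when no such $j(z)$ exists). Finally, from $\int\overline{f_2}\,f_2\circ T^l\,d\mu=\sum_j|c_j|^2\lambda_j^l$ and simplicity of the spectrum, the definition of the seminorm yields (by dominated convergence) $\|f_2\|_{U^{2}}^{4}=\lim_H\frac1H\sum_{l=1}^H\big|\int\overline{f_2}\,f_2\circ T^l\,d\mu\big|^2=\sum_j|c_j|^4\ge|c_{j(z)}|^4$, so $|c_{j(z)}|\le\|f_2\|_{U^{2}}=\|f\|_{U^{2}}$. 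Combining the two estimates proves the lemma.

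The step requiring the most care is the Kronecker component, where a possibly infinite eigenfunction expansion must be interchanged with a Cesàro limit: this is exactly where Wiener's maximal ergodic inequality and the Parseval-type identity $\|f_2\|_{U^{2}}^{4}=\sum_j|c_j|^4$ enter, and where one must be careful about the order of quantifiers so as to get a single null set outside of which the bound holds for all $z$ at once — automatic on the $f_2$-side, and on the $f_1$-side supplied by the uniform-in-$z$ statement of Proposition~\ref{Assani-Moore}. The orthogonal component $f_1$, by contrast, costs essentially nothing once that proposition (or the elementary van der Corput estimate) is in hand.
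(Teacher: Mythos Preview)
Your proof is correct. The paper does not spell out a proof of this lemma; it merely calls it ``straightforward'' and, for the nilsequence extension, points to induction combined with the classical van der Corput inequality. So the intended argument is a direct van der Corput estimate, whereas you take a more structural route: decompose $f$ along the Kronecker factor $\mathcal{Z}_1$, kill the anti-Kronecker part $f_1$, and treat the Kronecker part $f_2$ by an explicit eigenfunction expansion together with the identity $\|f_2\|_{U^2}^4=\sum_j|c_j|^4$.

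Two remarks. First, invoking Proposition~\ref{Assani-Moore} for the $f_1$-piece is logically permissible but disproportionate: that proposition is the Wiener--Wintner extension of Bourgain's double recurrence theorem, a vastly deeper statement than the present lemma. Your parenthetical van der Corput alternative is the appropriate elementary argument and matches the paper's hint. Second, the Kronecker splitting is not actually needed: one application of van der Corput (Cauchy--Schwarz plus Birkhoff) gives, for a.e.\ $x$ and every $|z|=1$,
\[
\limsup_{N}\Big|\frac1N\sum_{n<N}z^nf(T^nx)\Big|^2\le\Big\|\frac1H\sum_{h<H}z^h f\circ T^h\Big\|_2^2,
\]
and letting $H\to\infty$ the right-hand side converges (by the spectral theorem, or equivalently by your eigenfunction computation) to $\sigma_f(\{\bar z\})\le\big(\sum_\lambda\sigma_f(\{\lambda\})^2\big)^{1/2}=\|f\|_{U^2}^2$. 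This collapses your two cases into one and is presumably what the paper means by ``straightforward.'' Your decomposition buys a cleaner separation of the ``zero'' and ``structured'' contributions and makes the role of the Kronecker factor explicit, at the cost of a slightly longer argument and the maximal-inequality step for the remainder $R_J$.
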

The previous lemma can be extended to the nilsequences. We thus have, for any $k$-nilsequence $(b_n)$.
\begin{eqnarray}\label{WW-Nil}
\int \limsup_{N \longrightarrow +\infty}\Big|{\frac1{N}}\sum_{n=0}^{N-1}b_nf(T^{n}x) \Big| d\mu(x)
\leq \|f\|_{U^{k+1}}.
\end{eqnarray}
The proof can be obtained  by induction and by applying the classical van der Corput inequality \cite[p.25]{KN}.   
\section{Some tools on the oscillation method and Calder\'{o}n transference principle}
 Let $k \geq 2$ and $(X,\ca,T_i,\mu)_{i=1}^{k}$ be a family of dynamical systems, that is, for each $i=1,\cdots, k$, $T_i$ is a measure-preserving transformation, ($\forall A \in \ca$, $\mu(T_i^{-1}A)=\mu(A)$.). The sequence of complex number $(a_n)$ is said to be good weight in $L^{p_i}(X,\mu)$, $p_i\geq1$, $i=1,\cdots,k$, with 
 $\sum_{i=1}^{k}\frac1{p_i}=1,$ if for any $f_i \in L^{p_i}(X,\mu)$, $i=1,\cdots,k$,
 the ergodic $k$-multilinear averages   
 $$\frac1{N}\sum_{j=1}^{N}a_j\prod_{i=1}^{k}f_i(T_i^jx)$$
 converges a.e.. The maximal multilinear ergodic inequality is said to hold in $L^{p_i}(X,\mu)$, $p_i\geq1$, $i=1,\cdots,k$, with 
 $\sum_{i=1}^{k}\frac1{p_i}=1,$ if for any $f_i \in L^{p_i}(X,\mu)$, $i=1,\cdots,k$, the maximal function given by 
 $$M(f_1,\cdots,f_k)(x)=\sup_{N \geq 1}\Big|\frac{1}{N}\sum_{j=1}^{N}a_j\prod_{i=1}^{k}f_i(T_i^jx)\Big|$$
 satisfy the weak-type inequality 
 $$\sup_{\lambda > 0}\Bigg(\lambda \mu\Big\{x~~:~~M(f)(x)>\lambda \Big\}\Bigg) \leq C \prod_{i=1}^{k}\big\|f_i\big\|_{p_i},$$
 where $C$ is an absolutely constant.
 
 As far as the author know, it is seems that it is not known whether the classical maximal multilinear ergodic inequality ($a_n=1$, for each $n$) holds 
 for the general case $n \geq 3$. Nevertheless, we have the following Calder\'{o}n transference principal in the homogeneous case. 
 
\begin{Prop}\label{CalderonP} Let $(a_n)$ be a sequence of complex number and assume that for any $\phi,\psi \in \ell^2(\Z)$, for any non-constant polynomials $P,Q$ mapping natural numbers to themselves, for any $1 \leq p,q,r \leq +\infty$  such that $\frac{1}{r}=\frac{1}{p}+\frac{1}{q}$,  we have
	\begin{align*}
			\Big\|\sup_{ N  \geq 1}\Big|\frac1{N}\sum_{n=1}^{N}a_n \phi(j+P(n))\psi(j+Q(n)) 
		\Big|\Big\|_{\ell^r(\Z)} \\\leq  C.
		\big\|\phi\big\|_{\ell^p(\Z)}\big\|\psi\big\|_{\ell^q(\Z)},
	\end{align*}

	where $C$ is an absolutely constant. Then, for any dynamical system $(X,\A,T,\mu)$, for any $f \in L^p(X,\mu)$ and $g \in L^q(X,\mu)$ , we have 
		\begin{align*}
	\Big\|\sup_{N \geq 1}\Big|\frac1{N}\sum_{n=1}^{N}a_n f(T^{P(n)}x)g(T^{Q(n)}x) \Big|\Big\|_r\\ \leq
	C \big\|f\big\|_{p}\big\|g\big\|_{q}.
	\end{align*}
\end{Prop}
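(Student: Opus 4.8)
The plan is to prove Proposition~\ref{CalderonP} by a direct application of the Calder\'{o}n transference principle, transferring the assumed maximal inequality on $\ell^r(\Z)$ to the dynamical setting. First I would fix a dynamical system $(X,\A,T,\mu)$, functions $f \in L^p(X,\mu)$, $g \in L^q(X,\mu)$, and non-constant integer-valued polynomials $P,Q$. The idea is standard: for a large parameter $M$, truncate the averages at scales $N \le M$, and build from $f,g$ and a point $x$ the $\ell^p$- and $\ell^q$-sequences $\phi_x(j)=f(T^j x)\1_{[-M',M']}(j)$ and $\psi_x(j)=g(T^j x)\1_{[-M',M']}(j)$, where $M'$ is chosen large enough (of order $M$ times the degree-bound of $P,Q$) that for every $j$ in a central block of length $\sim M$ and every $n \le M$, the shifted indices $j+P(n)$ and $j+Q(n)$ still fall in $[-M',M']$. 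On that central block one then has the pointwise identity $\phi_x(j+P(n))\psi_x(j+Q(n)) = f(T^{j+P(n)}x)g(T^{j+Q(n)}x) = \big(f(T^{P(n)}\cdot)g(T^{Q(n)}\cdot)\big)(T^j x)$.

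Next I would apply the hypothesized inequality to $\phi_x,\psi_x$: this controls $\big\|\sup_{N\le M}|\frac1N\sum_{n\le N} a_n \phi_x(j+P(n))\psi_x(j+Q(n))|\big\|_{\ell^r(\Z)}$ by $C\|\phi_x\|_{\ell^p}\|\psi_x\|_{\ell^q}$. Restricting the $\ell^r$-norm on the left to the central block of length $L\sim M$, and using the pointwise identity there, the left side dominates $\big(\sum_{j \text{ central}} \sup_{N\le M}|\frac1N\sum_{n\le N} a_n f(T^{j+P(n)}x)g(T^{j+Q(n)}x)|^r\big)^{1/r}$. Then I integrate the $r$-th power over $x \in X$, use Fubini, and invoke the measure-preservation of $T$ (each $T^j$ preserves $\mu$, so each summand over $j$ contributes the same integral $\int_X \sup_{N\le M}|\frac1N\sum a_n f(T^{P(n)}x)g(T^{Q(n)}x)|^r d\mu$). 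This yields $L \cdot \|\sup_{N\le M}|\cdots|\|_r^r \le C^r \int_X \|\phi_x\|_{\ell^p}^r\|\psi_x\|_{\ell^q}^r d\mu$. Bounding $\|\phi_x\|_{\ell^p}^r$ via $\|\phi_x\|_{\ell^p}^p = \sum_{|j|\le M'} |f(T^j x)|^p$ and similarly for $\psi$, integrating, and again using measure-preservation gives $\int_X \|\phi_x\|_{\ell^p}^p d\mu = (2M'+1)\|f\|_p^p$, and likewise for $g$; an application of H\"older in the $x$-integral (with exponents $p/r$ and $q/r$) then produces $\int_X \|\phi_x\|_{\ell^p}^r\|\psi_x\|_{\ell^q}^r d\mu \le \big((2M'+1)\|f\|_p^p\big)^{r/p}\big((2M'+1)\|g\|_q^q\big)^{r/q} = (2M'+1)\|f\|_p^r\|g\|_q^r$.

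Dividing through by $L$ and noting $(2M'+1)/L$ is bounded by a constant depending only on $\deg P,\deg Q$ (absorb it into $C$), I obtain $\|\sup_{N\le M}|\frac1N\sum_{n\le N} a_n f(T^{P(n)}x)g(T^{Q(n)}x)|\|_r \le C\|f\|_p\|g\|_q$ with $C$ independent of $M$. Finally, letting $M\to\infty$ and applying the monotone convergence theorem upgrades the bound to the full supremum over $N\ge 1$, which is the claimed inequality.

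I expect the main technical obstacle to be the bookkeeping around the polynomial shifts: because $P,Q$ are polynomials rather than linear, the ``safety margin'' $M'$ must be taken of order $\max(\deg P,\deg Q)$ times a power of $M$, and one must check carefully that the ratio $(2M'+1)/L$ stays bounded as $M\to\infty$ and that the edge terms (indices $j$ near the boundary of the central block, where the pointwise identity can fail) are genuinely negligible relative to the main block of length $L\sim M$. A secondary point requiring care is the H\"older step in the $x$-variable, which needs $r \le \min(p,q)$ — this is automatic from $\frac1r=\frac1p+\frac1q$ with $p,q\ge 1$ — so no extra hypothesis is needed there. Everything else is the routine Calder\'{o}n transference argument.
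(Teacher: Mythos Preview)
The paper does not give a detailed proof of this proposition; it is stated as the standard Calder\'on transference principle and left to the reader (the paper only remarks afterwards that the argument extends to $k$-multilinear averages and has a finitary version on $\Z/J\Z$). Your approach is precisely this standard transference argument and is essentially correct.

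There is, however, a genuine bookkeeping gap in your parameter choices for polynomials of degree at least $2$. You take the central block to have length $L\sim M$ and the support radius $M'$ to be the minimal value guaranteeing $j+P(n),\,j+Q(n)\in[-M',M']$ for $|j|\lesssim M$ and $n\le M$; this forces $M'\gtrsim \max_{n\le M}(|P(n)|+|Q(n)|)\sim M^{d}$ with $d=\max(\deg P,\deg Q)$. Then $(2M'+1)/L\sim M^{d-1}$, which is \emph{not} bounded as $M\to\infty$ when $d\ge 2$, contrary to your claim that this ratio ``is bounded by a constant depending only on $\deg P,\deg Q$''. You even flag this as the point to check, but the check fails with the setup as written.

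The standard remedy is to decouple the two scales: fix the truncation level $M$, introduce an \emph{independent} parameter $J$ for the support of $\phi_x,\psi_x$ (say $\phi_x=f(T^{\cdot}x)\1_{[1,J]}$, $\psi_x=g(T^{\cdot}x)\1_{[1,J]}$), and take the central block to be $[C_{P,Q}M^{d},\,J-C_{P,Q}M^{d}]$. Then the ratio of support length to central block length is $J/(J-2C_{P,Q}M^{d})\to 1$ as $J\to\infty$ with $M$ fixed. Your integration and H\"older step (with exponents $p/r,\,q/r$, using $r/p+r/q=1$) now give
\[
\Big\|\sup_{N\le M}\Big|\frac1N\sum_{n=1}^{N}a_n f(T^{P(n)}\cdot)g(T^{Q(n)}\cdot)\Big|\Big\|_r^r
\le C^r\,\frac{J}{J-2C_{P,Q}M^d}\,\|f\|_p^r\|g\|_q^r,
\]
and letting $J\to\infty$ first, then $M\to\infty$ via monotone convergence, yields the conclusion with the same constant $C$ as in the hypothesis. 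With this correction your proof is complete.
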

\noindent We further have
\begin{Prop}\label{CalderonP2} Let $(a_n)$ be a sequence of complex number and  assume that for any $\phi,\psi \in \ell^2(\Z)$, for any $\lambda>0$, for any integer $J \geq 2$, for any non-constant polynomials $P,Q$ mapping natural numbers to themselves, for any $1 \leq p,q\leq +\infty$  such that $\frac{1}{p}+\frac{1}{q}=1$, , we have
\begin{align*}
	\sup_{\lambda > 0} \Big(\lambda\Big|\Big\{1 \leq j \leq J~~:~~ \sup_{ N  \geq 1}\Big|\frac1{N}\sum_{n=1}^{N}a_n \phi(j+P(n))\psi(j+Q(n)) 
	\Big|> \lambda \Big\}\Big|\Big) \\\leq C \big\|\phi\big\|_{\ell^p(\Z)}\big\|\psi\big\|_{\ell^q(\Z)},
\end{align*}
	where $C$ is an absolutely constant. Then, for any dynamical system $(X,\A,T,\mu)$, for any $f,g \in L^2(X,\mu)$, we have 
\begin{align*}	
	\sup_{\lambda> 0} \Big(\lambda \mu\Big\{x \in X~~:~~\sup_{N \geq 1}\Big|\frac1{N}\sum_{n=1}^{N}a_n f(T^{P(n)}x)g(T^{Q(n)}x) \Big| > \lambda \Big\}\Big)\\ \leq  C
	\big\|f\big\|_{2}.\big\|g\big\|_{2}.
\end{align*} 
\end{Prop}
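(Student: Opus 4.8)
The plan is to establish Proposition \ref{CalderonP2} by the classical Calder\'{o}n transference argument, adapted to the bilinear/weighted setting. Fix a dynamical system $(X,\A,T,\mu)$, functions $f,g \in L^2(X,\mu)$, and a threshold $\lambda>0$. Truncate the supremum: it suffices to bound $\lambda\mu\{x : M_{N_0}(f,g)(x) > \lambda\}$ uniformly in $N_0$, where $M_{N_0}(f,g)(x)=\sup_{1 \leq N \leq N_0}|\frac1{N}\sum_{n=1}^N a_n f(T^{P(n)}x)g(T^{Q(n)}x)|$, since $M(f,g)=\sup_{N_0} M_{N_0}(f,g)$ is an increasing limit and one applies monotone convergence at the end. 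For the truncated maximal function, only finitely many translates $T^{P(n)}x, T^{Q(n)}x$ with $n \leq N_0$ intervene, and they all lie in the orbit segment $\{T^{-R}x,\dots,T^{R}x\}$ for $R = R(N_0) := \max_{n \leq N_0}\max(P(n),Q(n))$ (using positivity of $P,Q$ on the relevant range; one slides indices if $P,Q$ take negative values on small arguments).

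The key step is the transfer itself. For $x \in X$ and a large integer $J$, form the finitely supported sequences on $\Z$ by $\phi_x(j) = f(T^j x)\1_{[0,J+2R]}(j)$ and $\psi_x(j)=g(T^j x)\1_{[0,J+2R]}(j)$. Then for each $j$ with $R \leq j \leq J+R$ and each $N \leq N_0$ one has $\frac1N\sum_{n=1}^N a_n \phi_x(j+P(n))\psi_x(j+Q(n)) = \frac1N\sum_{n=1}^N a_n f(T^{j+P(n)}x)g(T^{j+Q(n)}x)$, which is exactly the ergodic average evaluated at the point $T^j x$. Hence $\{R \leq j \leq J+R : M_{N_0}(f,g)(T^j x) > \lambda\}$ injects into $\{1 \leq j \leq J+2R : \sup_{N\geq1}|\frac1N\sum a_n \phi_x(j+P(n))\psi_x(j+Q(n))| > \lambda\}$, so the hypothesized weak-$(2,2)$ bound on $\Z$ gives
\begin{align*}
\lambda\,\#\{R \leq j \leq J+R : M_{N_0}(f,g)(T^j x) > \lambda\} \leq C\,\|\phi_x\|_{\ell^2(\Z)}\|\psi_x\|_{\ell^2(\Z)}.
\end{align*}
Now integrate over $x$, use $\int_X \1\{M_{N_0}(f,g)(T^j x) > \lambda\}\,d\mu = \mu\{M_{N_0}(f,g) > \lambda\}$ for each fixed $j$ by $T$-invariance of $\mu$, and bound $\int_X \|\phi_x\|_{\ell^2}\|\psi_x\|_{\ell^2}\,d\mu \leq (\int_X \|\phi_x\|_{\ell^2}^2\,d\mu)^{1/2}(\int_X \|\psi_x\|_{\ell^2}^2\,d\mu)^{1/2} = (J+2R+1)\,\|f\|_2\|g\|_2$ by Cauchy--Schwarz and $T$-invariance. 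This yields
\begin{align*}
\lambda\,(J+1)\,\mu\{M_{N_0}(f,g) > \lambda\} \leq C\,(J+2R+1)\,\|f\|_2\|g\|_2.
\end{align*}
Dividing by $J+1$ and letting $J \to \infty$ with $R=R(N_0)$ fixed kills the $R$-dependence, giving $\lambda\,\mu\{M_{N_0}(f,g) > \lambda\} \leq C\,\|f\|_2\|g\|_2$. Finally let $N_0 \to \infty$ and take the supremum over $\lambda>0$.

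The main obstacle is purely bookkeeping rather than conceptual: one must handle the boundary/edge effects carefully so that the ratio $(J+2R+1)/(J+1) \to 1$, which forces the order of limits (first $J\to\infty$ at fixed truncation level $N_0$, only afterward $N_0 \to \infty$) — reversing them would let $R(N_0)$ blow up faster than $J$. A secondary point is the sign convention on the polynomials $P,Q$: since they merely map $\N$ to itself, $P(n)$ and $Q(n)$ are nonnegative for $n \geq 1$, so the window $[0,J+2R]$ and the shift by $R$ suffice; no genuine difficulty arises there. The weighted nature of the averages (the factor $a_n$) and the bilinearity play no role in the transference — the argument is agnostic to the specific form of the inner average, only using that it is a fixed linear combination of point evaluations along the orbit. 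The proof of Proposition \ref{CalderonP} is identical with $\ell^r(\Z)$-norms replacing the weak-$\ell^1$ counting measure and H\"older with exponents $p,q,r$ replacing Cauchy--Schwarz.
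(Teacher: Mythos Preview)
Your proof is correct and is precisely the standard Calder\'on transference argument. The paper itself does not prove this proposition --- it is stated as a known principle, and immediately after the statement the author remarks only that ``it is easy to check'' that Propositions~\ref{CalderonP} and~\ref{CalderonP2} hold (even in the $k$-multilinear case) --- so what you have written is exactly the routine verification the author omits.
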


\noindent It is easy to check that Proposition \ref{CalderonP} and \ref{CalderonP2} hold for the homogeneous $k$-multilinear ergodic averages, for any 
$k \geq 3$. Moreover, it is easy to state and to prove the finitary version where $\Z$ is replaced by $\Z/J\Z$ and the functions $\phi$ and 
$\psi$ with $J$-periodic functions. 
\section{Main result and its proof}
The subject of this section is to state and to prove the main result of this paper and its consequences. We begin by stating our main result. 

\begin{thm}\label{Mainofmain}Let $(X,\ca,\mu,T)$ be an ergodic dynamical system, let $\bmnu$ be an aperiodic $1$-bounded multiplicative function and $a,b \in \Z$. Then, for any $f,g \in L^2(X)$, for almost all $x \in X$,
	$$\frac1{N}\sum_{n=1}^{N}\bmnu(n) f(T^{an}x)g(T^{bn}x) \tend{N}{+\infty}0,$$
\end{thm}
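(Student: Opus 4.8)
The plan is to combine the Wiener--Wintner version of Bourgain's double recurrence theorem (Proposition~\ref{Assani-Moore}) with the WWDKBSZ criterion (Proposition~\ref{WWKBSZ}) and the Calder\'on transference principle (Proposition~\ref{CalderonP2}), following Bourgain's original scheme. First I would reduce to the case $f,g \in L^\infty(X)$ with $\|f\|_\infty,\|g\|_\infty \le 1$: by density and the maximal inequality that Proposition~\ref{CalderonP2} transfers from $\ell^2(\Z)$, it suffices to prove the convergence on a dense subclass, so the bounded case gives the general case once the relevant weak-type maximal bound is available. I would also reduce to ergodic $T$ by the ergodic decomposition (the statement already assumes ergodicity, so this is automatic here).

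Next, the heart of the argument: fix $\varepsilon>0$ and split $f = f_1 + f_2$ (and similarly $g$) where $f_1 = \E(f\mid \mathcal{Z}_1)$ is the projection onto the Kronecker factor and $f_2 = f - f_1$ has $\|f_2\|_{U^2} = 0$. For the "uniform'' part $f_2$ (or $g_2$), Proposition~\ref{Assani-Moore} gives directly that
\[
\sup_{|z|=1}\Big|\frac1N\sum_{n=1}^N z^n f_2(T^{an}x)g(T^{bn}x)\Big| \tend{N}{+\infty} 0
\]
for a.e.\ $x$, and since $|\bmnu(n)|\le 1$ this forces the corresponding weighted average to $0$ as well (the weight is dominated pointwise by the supremum over unimodular $z$). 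So the problem is reduced to the case where both $f$ and $g$ are measurable with respect to the Kronecker factor, i.e.\ to eigenfunctions, where $f(T^{an}x)g(T^{bn}x)$ behaves (on the relevant factor) like $e^{2\pi i n(a\alpha+b\beta)}\cdot(\text{stuff depending on }x)$ for eigenvalues $\alpha,\beta$ — a single exponential sequence in $n$. On that piece one invokes the WWDKBSZ criterion: the hypothesis \eqref{eq:f_k limit} of Proposition~\ref{WWKBSZ} is checked because for primes $p\ne q$ the sequence $f(T^{pn}x)f(T^{qn}x)$ restricted to the Kronecker factor is again essentially an exponential, and twisting by $e^{2\pi i n(p-q)t}$ and taking $\sup_t$ one still gets a quantity that (away from finitely many resonances, handled by taking $\varepsilon$ small) is small; aperiodicity of $\bmnu$ then yields that the $\bmnu$-weighted exponential averages go to $0$ a.e., via \eqref{eq:almost orthogonality for f_k} with $\varepsilon\downarrow 0$. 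Combining the two pieces and letting $\varepsilon\to 0$ gives the a.e.\ convergence to $0$ of $\frac1N\sum_{n=1}^N\bmnu(n)f(T^{an}x)g(T^{bn}x)$.

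To make the splitting-and-limit argument rigorous one needs a maximal inequality controlling $\sup_N \big|\frac1N\sum_{n\le N}\bmnu(n)f(T^{an}x)g(T^{bn}x)\big|$ in terms of $\|f\|_2\|g\|_2$ (or $\|f\|_\infty\|g\|_\infty$), so that the error from approximating $f,g$ by bounded/Kronecker-measurable functions is uniformly small. This is exactly where Proposition~\ref{CalderonP2} enters: it reduces the needed maximal bound to a finitary weak-type estimate on $\Z$ (or $\Z/J\Z$) for $\frac1N\sum_{n\le N}\bmnu(n)\phi(j+an)\psi(j+bn)$, which one establishes by Bourgain's circle-method / Fourier-analytic argument on $\Z/J\Z$ — the discrete Fourier transform turns the bilinear average into a convolution, the major-arc contribution is controlled using the Davenport--Hua type estimate \eqref{vin} (extended to aperiodic $1$-bounded multiplicative functions via the Matom\"aki--Radziwi\l\l{} / Tao structure of such functions, or via \eqref{vin}-type cancellation on arithmetic progressions), and the minor-arc contribution by the standard oscillation/entropy bounds underlying Bourgain's double recurrence theorem.

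The main obstacle, as Bourgain's own proof shows, is precisely this last step: obtaining the uniform maximal (weak-type) inequality for the weighted bilinear operator on $\ell^2(\Z)$, including the delicate oscillation estimates along the sequence of scales $N_k$ and the transition from the single-scale arithmetic cancellation of $\bmnu$ to a bound for the maximal function. Replacing $\bmu$ or $\bml$ by a general aperiodic $1$-bounded multiplicative $\bmnu$ requires checking that the arithmetic input \eqref{vin} (and its consequences on progressions) survives — this is where the earlier gap for the M\"obius/Liouville case must be repaired, and it is the step I expect to demand the most care; the Wiener--Wintner and transference ingredients above, by contrast, are essentially formal once that maximal bound is in hand.
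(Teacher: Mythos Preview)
Your Step~3 contains a genuine error. You claim that since $|\bmnu(n)|\le 1$, the convergence
\[
\sup_{|z|=1}\Big|\frac1N\sum_{n=1}^N z^n f_2(T^{an}x)g(T^{bn}x)\Big|\tend{N}{+\infty}0
\]
forces $\frac1N\sum_{n\le N}\bmnu(n)f_2(T^{an}x)g(T^{bn}x)\to 0$, because ``the weight is dominated pointwise by the supremum over unimodular $z$''. This implication is false: $\bmnu$ is not a character, so there is no such domination. Take $a_n=\overline{\bmnu(n)}$ with $\bmnu=\bmu$: by Davenport $\sup_z\big|\frac1N\sum_{n\le N}z^n\bmu(n)\big|\to 0$, yet $\frac1N\sum_{n\le N}\bmu(n)\overline{\bmu(n)}=\frac1N\sum_{n\le N}\bmu(n)^2\to 6/\pi^2\neq 0$. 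Consequently Proposition~\ref{Assani-Moore} does not by itself dispose of the $U^2$-null piece once the multiplicative weight is inserted; that is exactly the hard part of the theorem, and your reduction to the Kronecker factor rests on this illegitimate step.

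The paper organizes the argument differently and avoids the trap. After reducing to $\|g\|_{U^2}=0$, it applies Proposition~\ref{Assani-Moore} not to the product $f_2(T^{an}x)g(T^{bn}x)$ but to the pair $(g,g)$ with exponents $(-p,-q)$ for distinct primes $p,q$, thereby verifying hypothesis~\eqref{eq:f_k limit} of the WWDKBSZ criterion (Proposition~\ref{WWKBSZ}) for the \emph{single} function $g$. This produces the weighted Wiener--Wintner statement $\sup_z\big|\frac1N\sum_{n\le N}\bmnu(n)z^{-n}g(T^{-n}x)\big|\to 0$ a.e. When $f$ is an eigenfunction this already gives the bilinear conclusion. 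When $f$ is also in the orthocomplement, this linear smallness is used as the input condition~\eqref{C-delta} of Bourgain's fundamental Lemma~\ref{Bourgain-F} (after checking the technical hypotheses via \eqref{eq38}--\eqref{eq39}), and it is that lemma --- not merely a maximal inequality for density purposes --- that outputs the bilinear convergence. In short, Bourgain's finitary machinery sits at the heart of the convergence proof for the $U^2$-null part, not only in the $L^\infty\to L^2$ passage where your last two paragraphs place it.
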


\noindent Consequently, we obtain the following corollary. 

\begin{Cor}[\cite{IJL}]Let $(X,\ca,\mu,T)$ be an ergodic dynamical system, and $T_1,T_2$ be powers of $T$. Then, for any $f,g \in L^2(X)$, for almost all $x \in X$,
	$$\frac1{N}\sum_{n=1}^{N}\bmnu(n) f(T_1^nx)g(T_2^nx) \tend{N}{+\infty}0,$$
	where $\bmnu$ is the Liouville function or the M\"{o}bius function.
\end{Cor}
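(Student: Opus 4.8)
The Corollary is an immediate specialization of Theorem \ref{Mainofmain}. First I would note that the Liouville function $\bml$ and the M\"{o}bius function $\bmu$ are both $1$-bounded multiplicative functions: $|\bml(n)|=1$ and $|\bmu(n)|\leq 1$ for all $n$, and both satisfy $\bml(mn)=\bml(m)\bml(n)$ (completely multiplicative) and $\bmu(mn)=\bmu(m)\bmu(n)$ whenever $(m,n)=1$ (multiplicative), as recalled in the Set up and Tools section around \eqref{Mobius}. Second, both functions are aperiodic: this is precisely the content of Davenport's theorem \cite{Da} for $\bmu$ and Bateman--Chowla's theorem \cite{Batman-Cho} for $\bml$ (equivalently, it follows from \eqref{E:la} together with the identity $\bml(n)=\sum_{d:d^2\mid n}\bmu(n/d^2)$ and a routine estimate on the residual sum over $d\geq 2$), as stated at the end of that section. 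Thus in either case $\bmnu\in\{\bml,\bmu\}$ satisfies the hypotheses of Theorem \ref{Mainofmain}.

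Finally I would observe that the powers $T_1=T^{a}$ and $T_2=T^{b}$ for integers $a,b$ are exactly the maps appearing in Theorem \ref{Mainofmain}, since $f(T_1^n x)=f(T^{an}x)$ and $g(T_2^n x)=g(T^{bn}x)$. Therefore, applying Theorem \ref{Mainofmain} with these choices of $\bmnu$, $a$, and $b$ gives, for any $f,g\in L^2(X)$ and almost every $x\in X$,
$$\frac1{N}\sum_{n=1}^{N}\bmnu(n)\, f(T_1^n x)\, g(T_2^n x)\tend{N}{+\infty}0,$$
which is the assertion of the Corollary.

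The only place where there is anything to check beyond citing Theorem \ref{Mainofmain} is the aperiodicity of $\bml$ and $\bmu$; but since the excerpt explicitly records that these follow from the theorems of Davenport and Bateman--Chowla, there is no real obstacle here — the proof is genuinely a one-step reduction. (If one wanted $T_1,T_2$ to be arbitrary commuting powers of a single $T$ with possibly negative or zero exponents, the zero-exponent case trivializes one factor and reduces to the single weighted Birkhoff average $\frac1N\sum\bmnu(n)f(T^{cn}x)$, which is covered by the Bourgain--Sarnak ergodic theorem; but as worded, "powers of $T$" is subsumed directly by Theorem \ref{Mainofmain}.)
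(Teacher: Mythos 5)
Your proposal is correct and matches the paper's own treatment: the corollary is stated as an immediate consequence of Theorem \ref{Mainofmain}, using exactly the facts you cite, namely that $\bml$ and $\bmu$ are $1$-bounded multiplicative functions which are aperiodic by the theorems of Davenport and Bateman--Chowla, and that $T_1=T^a$, $T_2=T^b$ puts the averages in the form covered by the theorem. Nothing further is required.
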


Before proceeding to the proof of Theorem \eqref{Mainofmain}, we recall the following notations.

Let $T$ be a map acting on a probability space $(X,\mathcal{A},\mu)$ and $\bmnu$ be an aperiodic $1$-bounded multiplicative function. For any any $\rho>1$, we will denote by $I_\rho$ the set $\Big\{\En{\rho^n}, n \in \N \Big\}.$ The maximal functions are defined by 
\begin{align*}
M_{N_0,\bar{N}}(f,g)(x)=
\sup_{\overset{ N_0 \leq N \leq \bar{N}}{N \in I_\rho}}\Big|\frac1{N}\sum_{n=1}^{N}\bmnu(n) f(T^{n}x) g(T^{-n}x)\Big|,
\end{align*}
and
\begin{align*}
M_{N_0}(f,g)(x)=
\sup_{\overset{N \geq N_0}{N \in I_\rho}}\Big|\frac1{N}\sum_{n=1}^{N}\bmnu(n) f(T^{n}x) g(T^{-n}x)
\Big|.
\end{align*}

Obviously, 
\[\lim_{\bar{N} \longrightarrow +\infty}M_{N_0,\bar{N}}(f,g)(x)=M_{N_0}(f,g)(x).\]

For the shift $\Z$-action, the maximal functions are denoted by $m_{N_0,\bar{N}}(\phi,\psi)$ and $m_{N_0}(\phi,\psi)$.
We denote by $\ell^p(J),$  $p>1$, the subspace of  $\ell^p(\Z),$ of the observable functions with finite support subset of $[0,J].$ For each $0<\delta<1$, we define a function $\sigma_\delta$ by
\[\sigma_\delta(t)=\theta(\delta,t)-\theta(2\delta,t),\]
where $\theta(\delta,t)$ is the half of bumpfunction with support $(\delta, 2\delta]$ and
$\theta(\delta,2\delta)=1.$ 
We thus get 
$$\sum_{\overset{0<\delta<1} {\textrm{~~dyadic~~}}}\sigma_\delta(t)=1, ~~~~~\textrm{for~~each~~} t \in (0,1].$$
For any $N \in \N^*$, $x \in [1,J]$ and $\theta \in [0,1)$, we put
$$P_{x,N}(\theta)=\frac{1}{N}\sum_{n=x-N}^{x-1}e^{2 \pi i n \theta} \psi(n),$$
we further set
$$\bmnu_x(n)=\bmnu(n+x), ~~~~~~~\forall n \in \N.$$
\noindent{}For a finite subset $E$ of the torus $\T$ and $\epsilon>0$, we denote its $\epsilon$-neighbourhood by $E(\epsilon)$. We recall that $E(\epsilon)$ is given by
$$E(\epsilon)=\Big\{\lambda \in \T~~:~~~\min_{\gamma \in E}\big|\lambda-\gamma\big| < \epsilon\Big\}.$$

\noindent{}We denote by $\mathbb{A}(\T)$  the Weiner algebra of the absolutely convergent Fourier series equipped with the norm $\||.\||$. We recall that for any $\varphi \in \mathbb{A}(\T)$, we have  
$$ \big\|\big|\varphi\big\|\big|_{\mathbb{A}}=\sum_{ n \in \Z}\big|\widehat{\varphi}(n)\big|.$$
\noindent The Fourier transform on $\ell^2(\Z)$ will be denoted by $\F$. It is defined by 
$$\F(\phi)(\theta)=\sum_{ n \in \Z} f(n) e^{2 \pi i n \theta},~~~\textrm{for~~~} \phi \in \ell^2(\Z) \textrm{~~and~~} \theta \in \T.$$
We need also the following lemma.
\begin{lem}\label{BW}Let $\phi \in\ell^2(\Z)$ and $\varphi \in \mathbb{A}(\T)$. Suppose  that 
$\varphi$ is differentiable with $\varphi' \in L^2(\T)$ and  $\widehat{\varphi}(0)=0.$ Then
\begin{align}
\|\F^{-1} (\varphi \F(\phi))\|_{\infty} \leq \|\varphi\|_\mathbb{A} \big\|f\big\|_{\infty}, \label{WB1}\\
\|\varphi\|_\mathbb{A} \leq C \sqrt{\big\|\varphi\big\|_{2}} \sqrt{\big\|\varphi'\big\|_{2}}
\label{WB2}.
\end{align} 
\end{lem}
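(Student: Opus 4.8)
The plan is to establish the two displayed estimates separately; neither is the crux of the argument, so I would dispatch them quickly. First, for \eqref{WB1}, I would expand $\varphi(\theta)=\sum_{k\in\Z}\widehat\varphi(k)e^{2\pi i k\theta}$ and unwind the definitions of $\F$ and $\F^{-1}$: for every $n\in\Z$ one gets
\[
\F^{-1}\!\big(\varphi\,\F(\phi)\big)(n)=\sum_{m\in\Z}\widehat\varphi(n-m)\,\phi(m),
\]
so that $\phi\mapsto\F^{-1}(\varphi\F(\phi))$ is just convolution on $\Z$ with the sequence $(\widehat\varphi(k))_{k}$, which lies in $\ell^1(\Z)$ with $\ell^1$-norm exactly $\|\varphi\|_{\mathbb{A}}$ (this is precisely the hypothesis $\varphi\in\mathbb{A}(\T)$; the condition $\widehat\varphi(0)=0$ is not needed here). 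Bounding the convolution termwise gives $|\F^{-1}(\varphi\F(\phi))(n)|\le\|\phi\|_\infty\sum_k|\widehat\varphi(k)|=\|\varphi\|_{\mathbb{A}}\|\phi\|_\infty$ uniformly in $n$, which is \eqref{WB1}. (To justify the identification of that convolution with $\F^{-1}(\varphi\F(\phi))$ one uses the standard fact that $\F$ sends an $\ell^1\!\ast\!\ell^2$ convolution to a product.)

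For \eqref{WB2}, I would bring in the remaining two hypotheses through the integration-by-parts relation $\widehat{\varphi'}(n)=2\pi i n\,\widehat\varphi(n)$ together with Parseval, which give $\sum_{n\in\Z}n^2|\widehat\varphi(n)|^2=\frac1{4\pi^2}\|\varphi'\|_2^2$, and through $\widehat\varphi(0)=0$, which lets one sum over $n\neq0$ throughout. For any parameter $c>0$, Cauchy--Schwarz against the weight $(1+cn^2)^{-1/2}$ yields
\begin{align*}
\|\varphi\|_{\mathbb{A}}=\sum_{n\neq0}|\widehat\varphi(n)|
&\le\Big(\sum_{n\neq0}\frac{1}{1+cn^2}\Big)^{1/2}\Big(\sum_{n\neq0}(1+cn^2)|\widehat\varphi(n)|^2\Big)^{1/2}\\
&\le\Big(\frac{\pi}{\sqrt c}\Big)^{1/2}\Big(\|\varphi\|_2^2+\tfrac{c}{4\pi^2}\|\varphi'\|_2^2\Big)^{1/2},
\end{align*}
where I used $\sum_{n\neq0}(1+cn^2)^{-1}\le\int_{\R}(1+cx^2)^{-1}\,dx=\pi/\sqrt c$ and the two identities above. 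Choosing $c=4\pi^2\|\varphi\|_2^2/\|\varphi'\|_2^2$ (the case $\varphi\equiv0$, and the case $\varphi'\equiv0$ which forces $\varphi\equiv0$ by $\widehat\varphi(0)=0$, being trivial) balances the two factors and produces $\|\varphi\|_{\mathbb{A}}\le C\sqrt{\|\varphi\|_2\|\varphi'\|_2}$ with an absolute constant; with these normalisations one in fact gets $C=1$. An equivalent, more mnemonic route is to split the frequencies into $0<|n|\le R$ and $|n|>R$, estimate the first block by Cauchy--Schwarz and $\|\varphi\|_2$ and the second by $\sum_{|n|>R}n^{-2}\le 2/R$ and $\|\varphi'\|_2$, and then optimise over the integer $R$; there one wants $R\ge1$, which is automatic from Wirtinger's inequality $\|\varphi\|_2\le\frac1{2\pi}\|\varphi'\|_2$ (again a consequence of $\widehat\varphi(0)=0$).

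I do not expect a genuine obstacle in either part. The one point that needs a little care is in \eqref{WB2}: a single Cauchy--Schwarz with the crude weight $1/|n|$ would only give $\|\varphi\|_{\mathbb{A}}\le C\|\varphi'\|_2$ (or, if one insists on retaining the $\|\varphi\|_2$ factor, a bound with a logarithmic loss), whereas the geometric-mean inequality with an \emph{absolute} constant requires the weighted version above, or equivalently the optimised two-scale split. It is also worth recording that $\widehat\varphi(0)=0$ is indispensable in \eqref{WB2}, since a nonzero constant $\varphi$ has $\|\varphi'\|_2=0$ while $\|\varphi\|_{\mathbb{A}}>0$.
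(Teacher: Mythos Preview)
Your argument for \eqref{WB1} is correct and essentially identical to the paper's: both recognise $\F^{-1}(\varphi\F(\phi))$ as the convolution $\widehat\varphi*\phi$ on $\Z$ and bound it by $\|\widehat\varphi\|_{\ell^1(\Z)}\|\phi\|_\infty=\|\varphi\|_{\mathbb A}\|\phi\|_\infty$.

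For \eqref{WB2} your proof is correct but follows a different path from the paper's. The paper first rewrites $|\widehat\varphi(n)|=\frac{1}{|n|}\,|\widehat{\varphi'}(n)|$ for $n\neq0$ and then proceeds by iterated Cauchy--Schwarz, eventually identifying $\sum_{n\neq0}\frac{1}{n^2}|\widehat{\varphi'}(n)|^2$ with $\|\varphi\|_2^2$ and $\sum_{n\neq0}|\widehat{\varphi'}(n)|^2$ with $\|\varphi'\|_2^2$. You instead keep the coefficients as $|\widehat\varphi(n)|$, apply a \emph{single} Cauchy--Schwarz against the parametrised weight $(1+cn^2)^{-1/2}$, and optimise in $c$; your alternative ``split at $|n|=R$ and optimise'' is the same device in dyadic form. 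What your route buys is transparency: the roles of $\|\varphi\|_2$ (controlling low frequencies) and $\|\varphi'\|_2$ (controlling high frequencies) are cleanly separated, the constant comes out explicitly (indeed $C=1$ with your normalisation), and the place where the hypothesis $\widehat\varphi(0)=0$ is used is made explicit. The paper's route is more compressed but relies on tracking the exponents through repeated Cauchy--Schwarz.
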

\begin{proof}By the convolution theorem, we have
$$\F^{-1} (\varphi \F(\phi))=\F^{-1} (\varphi)*\phi.$$
Therefore
\begin{align*}
\|\F^{-1}(\varphi \F(\phi))\|_{\infty} 
&=\sup_{k \in \Z}\Big|\sum_{ n \in \Z} (\F^{-1}(\varphi))(n)\phi(k-n)\Big|\\
&\leq \Big|\sum_{ n \in \Z} (\F^{-1}(\varphi))(n)\Big| \big\|\phi\big\|_{\infty}\\
& \leq \|\varphi\|_\mathbb{A} \big\|\phi\big\|_{\infty}.
\end{align*}
We conclude the proof by noticing that $\F^{-1}(\varphi)(n)=\widehat{\varphi}(n)$, for all $n \in \Z$. We proceed now to the proof of \eqref{WB2}. By the time differential property of Fourier transform, we have
$$\widehat{\varphi}(n)=\frac{1}{in} \widehat{\varphi'}(n).$$
We refer to \cite[Theorem 1.6, p. 4]{Katznelson}, for the proof. 
Applying Cauchy-Schwarz inequality combined with Parseval equality, we get
\begin{align*}
\|\varphi\|_\mathbb{A} &=\sum_{ n \neq 0} \frac{1}{|n|} \big|\widehat{\varphi'}(n)\big|\\
&\leq  \Big(\sum_{ n \neq 0} \frac{1}{n^2}\Big)^{\frac{1}{2}} \Big(\sum_{ n \neq 0} \frac{1}{n^2} \big|\widehat{\varphi'}(n)\big|^2\Big)^{\frac{1}{2}}\\
& \leq \frac{\pi}{\sqrt{3}}  \Big(\sum_{ n \neq 0} \frac{1}{|n|} \big|\widehat{\varphi'}(n)\big| \big|\widehat{\varphi'}(n)\big|\Big)^{\frac{1}{2}}\\
&  \leq \frac{\pi}{\sqrt{3}}  \Big(\sum_{ n \neq 0} \frac{1}{n^2} \big|\widehat{\varphi'}(n)\big|^2\Big)^{\frac{1}{2}} 
\Big(\sum_{ n \neq 0} \big|\widehat{\varphi'}(n)\big|^2\Big)^{\frac{1}{2}}\\
&  \leq \frac{\pi}{\sqrt{3}} \sqrt{\big\|\varphi\big\|_{2}} \sqrt{\big\|\varphi'\big\|_{2}}
\end{align*}
\noindent This finish the proof of \eqref{WB2} and the lemma.
\end{proof}
\noindent Lemma \ref{BW} is stated in \cite{Bourgain-D} as equations (3.1) and (3.3).
 
\noindent At this point, we state the key lemma from \cite{Bourgain-D} in the proof of our main result.  

\begin{lem}\label{Bourgain-F}
Let $N_0,N_1,J$ be a positive integers such that $J >> N_1>N_0.$ Let $\phi, \psi \in \ell^{\infty}(J)$, and $0<\delta<1$ be a dyadic number. Suppose that for any interval $I \subset [0,J],$ with $|I|>N_0,$ we have 
		\begin{align}\label{C-delta}
		\sup_{ \theta \in [0,1) }\Bigg|\frac{1}{|I|}\sum_{n \in I} e^{2 \pi i n \theta} \psi(n)\Bigg|< \delta.
		\end{align}	
		Then, 
		\begin{align}\label{delta-inq}
		\frac{1}{J}\Bigg|\Big\{x \in [1,J]~~:~~ N_x (\textrm{dyadic}) \in [N_0,N_1] , N_x > \frac{1}{\delta},\notag \\
		\Big|\int \widehat{\phi}(\theta) \Big(P_{x,N_x}(\theta)\sigma_\delta\big(\big|P_{x,N_x}(\theta)\big|\big)\Big) e^{4  \pi i x \theta} d\theta\Big| >\delta^{\frac{1}{10^9}}\Big\}\Bigg|< C\delta^{\frac{1}{10^6}}.
		\end{align}
\end{lem}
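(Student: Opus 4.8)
The plan is a second‑moment ($TT^{*}$) estimate followed by Markov's inequality. Write $Q_{x,N}(\theta)=P_{x,N}(\theta)\,\sigma_\delta\!\big(|P_{x,N}(\theta)|\big)$ and $R_x=\int_\T\widehat\phi(\theta)\,Q_{x,N_x}(\theta)\,e^{4\pi ix\theta}\,d\theta$, so that \eqref{delta-inq} asks for $\tfrac1J\big|\{x\in[1,J]:N_x\text{ dyadic}\in[N_0,N_1],\,N_x>1/\delta,\,|R_x|>\delta^{1/10^{9}}\}\big|<C\delta^{1/10^{6}}$. Two elementary observations will be used throughout. First, since $\sigma_\delta=\theta(\delta,\cdot)-\theta(2\delta,\cdot)$ is supported in $[\delta,4\delta]$ with $0\le\sigma_\delta\le 1$, one has the pointwise bounds $|Q_{x,N}(\theta)|\le 4\delta$ and $Q_{x,N}(\theta)=0$ unless $|P_{x,N}(\theta)|\ge\delta$; crucially one may use only these two facts about the cut‑off, never a Fourier expansion of $\sigma_\delta(|P_{x,N}(\theta)|)$. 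Second, by Parseval $\|\widehat\phi\|_{L^{2}(\T)}=\|\phi\|_{\ell^{2}(\Z)}$, which is $\lesssim\sqrt J$ since $\phi$ is bounded and supported in $[0,J]$; and $R_x$ is, up to conventions, a frequency‑localised piece of a transferred bilinear average of $\phi$ and $\psi$, the localisation being to the frequencies at which the $\psi$‑exponential sum $P_{x,N_x}$ has size $\approx\delta$ — this is where the weight $e^{4\pi ix\theta}$ and the pairing of $\widehat\phi$ against $P_{x,N_x}$ come from.

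First I would remove the stopping time $N_x$: it is dyadic in $[N_0,N_1]$, hence takes $O(\log N_1)$ values, so it suffices to bound, for each fixed dyadic $N\in[N_0,N_1]$ with $N>1/\delta$, the quantity $\tfrac1J\sum_{x=1}^{J}|R^{(N)}_x|^{2}$ with $R^{(N)}_x=\int_\T\widehat\phi(\theta)Q_{x,N}(\theta)e^{4\pi ix\theta}d\theta$, then sum the $O(\log N_1)$ contributions and apply Markov at threshold $\delta^{1/10^{9}}$. The gap between the exponents $1/10^{9}$ and $1/10^{6}$ is exactly the slack left for this step and for the losses below, so it is enough to reach $\tfrac1J\sum_x|R^{(N)}_x|^{2}\lesssim\delta^{1/10^{6}+2/10^{9}}$. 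Expanding the square and summing in $x$ gives
\[
\frac1J\sum_{x=1}^{J}|R^{(N)}_x|^{2}=\int_\T\!\!\int_\T\widehat\phi(\theta)\,\overline{\widehat\phi(\theta')}\;\mathcal K_N(\theta,\theta')\,d\theta\,d\theta',
\qquad
\mathcal K_N(\theta,\theta')=\frac1J\sum_{x=1}^{J}Q_{x,N}(\theta)\,\overline{Q_{x,N}(\theta')}\,e^{4\pi ix(\theta-\theta')}.
\]

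The heart of the matter is estimating the kernel $\mathcal K_N$, and this is where \eqref{C-delta} enters. From $|Q_{x,N}(\theta)|\le 4\delta\,\1_{\{|P_{x,N}(\theta)|\ge\delta\}}$ one gets $\|\mathcal K_N\|_{L^{\infty}}\le\tfrac{16\delta^{2}}{J}\sup_\theta\#\{x\in[1,J]:|P_{x,N}(\theta)|\ge\delta\}$. Now $P_{x,N}(\theta)$ is $\tfrac1N$ times the exponential sum of $\psi$ over $[x-N,x-1]$; since $\psi$ is supported in $[0,J]$ and $x\le J$, the effective window $I_x$ has length $\ell_x=\min(N,x)$, and for $x>N_0$ this length is $\ge N_0$, whence by \eqref{C-delta}
\[
|P_{x,N}(\theta)|=\frac{\ell_x}{N}\Big|\frac1{\ell_x}\sum_{n\in I_x}e^{2\pi in\theta}\psi(n)\Big|<\frac{\ell_x}{N}\,\delta\le\delta
\qquad(\theta\in[0,1)).
\]
(If \eqref{C-delta} is only postulated for windows strictly longer than $N_0$, the same holds as long as $N\neq N_0$, which is the case for every scale occurring in the intended application.) Consequently, for each $\theta$ the set $\{x:|P_{x,N}(\theta)|\ge\delta\}$ reduces to the $O(N_0)$ boundary indices $x\le N_0$, so $\|\mathcal K_N\|_{L^{\infty}}\lesssim N_0\delta^{2}/J$. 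Cauchy–Schwarz in $(\theta,\theta')$ together with $\|\widehat\phi\|_{L^{2}}^{2}\lesssim J$ then bounds $\tfrac1J\sum_x|R^{(N)}_x|^{2}$ by $\lesssim N_0\delta^{2}/J$; summing the $O(\log N_1)$ scales and using $J\gg N_1>N_0$ (quantitatively $J\ge CN_0\delta^{-2}\log N_1$ suffices, legitimate since $\delta$ is fixed before $J\to\infty$) gives $\tfrac1J\sum_x|R_x|^{2}\lesssim\delta^{1/10^{6}+2/10^{9}}$, and Markov concludes.

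The points needing care are: the nonlinearity of the cut‑off (only its support and sup‑bound are admissible); the removal of the stopping time $N_x$ before the $TT^{*}$ expansion, costing a harmless $\log N_1$; and the borderline window length $N_0$, which must be compatible with the exact form of \eqref{C-delta} and with the range of the $N_x$. The genuinely substantial input — already quantitative in Bourgain's treatment — is the passage from \eqref{C-delta} to smallness of $\mathcal K_N$, i.e.\ uniform‑in‑$\theta$ control of the localised exponential sums $P_{x,N}(\theta)$. If one only had \eqref{C-delta} for windows much longer than the scales $N$ in play, or only down to a fixed size rather than down to $\delta$, this trivial bound would be lost; one would then split $[0,1)$ into major and minor arcs for each $P_{x,N}(\theta)$, using \eqref{C-delta} on the minor (non‑resonant) arcs and a separate count of the resonant frequencies to extract the power $\delta^{1/10^{6}}$. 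That circle‑method bookkeeping is what I expect to be the main obstacle.
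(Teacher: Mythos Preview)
Your argument hinges on the observation that, for the lemma \emph{as literally stated}, the hypothesis \eqref{C-delta} with the \emph{same} $\delta$ as in the cutoff $\sigma_\delta$ forces $|P_{x,N}(\theta)|<\delta$ for every $\theta$ and every $x$ whose effective window has length $>N_0$ --- hence $\sigma_\delta(|P_{x,N}(\theta)|)\equiv 0$ and $R_x=0$ outside $O(N_0)$ boundary indices. Modulo the edge case $N_x=N_0$ and the nonquantitative ``$J\gg N_1$'', this does collapse the statement; your reading of the literal text is correct. (One slip: your Cauchy--Schwarz step gives $\tfrac1J\sum_x|R^{(N)}_x|^2\lesssim N_0\delta^2$, not $N_0\delta^2/J$ --- the factor $\|\widehat\phi\|_2^2\lesssim J$ cancels the $1/J$ in $\|\mathcal K_N\|_\infty$. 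The simpler counting $\#\{x:R_x\neq0\}\le N_0$ already gives density $\le N_0/J$, which is what you actually need.)

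This is, however, an artefact of how the lemma has been excerpted. In Bourgain's argument --- and in the paper's own application, see \eqref{Bdelta} and \eqref{eq217} --- the hypothesis is $\sup_\theta|\cdot|<\delta_0$ for a \emph{fixed} $\delta_0$, while the conclusion is needed for every dyadic $\delta\in(N_x^{-1},\delta_0)$. For such $\delta<\delta_0$ one only knows $|P_{x,N}(\theta)|<\delta_0$, which says nothing about $\sigma_\delta(|P_{x,N}|)$: the set $\{\theta:|P_{x,N}(\theta)|\in[\delta,4\delta]\}$ is typically nonempty, and its structure is the entire problem. Your $TT^\ast$ bound then carries no savings, which is exactly the scenario you flag in your final paragraph but do not treat.

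The paper does not give a proof of the lemma either; it lists the ingredients of Bourgain's argument: the large-spectrum set $E_0$ of Proposition~\ref{BMZ} (isolating the $O(\delta^{-2})$ frequencies where $|P|$ can reach size $\delta$), the Local Partition Lemma~\ref{Partition} and its $\epsilon$-localised form~\ref{PartitionII} (to pass between scales of $I$), and the $\lambda$-separated and entropy lemmas (to control the supremum over dyadic $N$). That machinery is precisely the ``circle-method bookkeeping'' you anticipate --- major arcs near $E_0$, minor arcs elsewhere --- and it is the substantive content. Your trivialisation bypasses it only because the $\delta$ in \eqref{C-delta} has been (inadvertently) matched to the $\delta$ in $\sigma_\delta$; the genuine lemma requires the approach you sketch at the end, carried out in full.
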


\noindent It follows, by  Borel-Cantelli lemma, that under the condition \eqref{C-delta}, we have
$$\frac{1}{J}\sum_{x=1}^{J}m_{N_0,N_1}(\phi,\psi)<\delta^{\frac{1}{10^9}} \tend{\delta}{0} 0.$$
For the link between \eqref{delta-inq} and $m_{N_0,N_1}(\phi,\psi)$, we refer to the equation \eqref{Bconvolution}.\\

\noindent{}Lemma \ref{Bourgain-F} is a compilation of the principal result proved by Bourgain in \cite{Bourgain-D}. The assumption \eqref{C-delta} is stated as \cite[eq (2.10), page 142]{Bourgain-D} and the conclusion is formulated as the last equation \cite[eq (2.10), page 161]{Bourgain-D}. For the proof of Lemma \ref{Bourgain-F} one may need the following proposition.
\begin{prop}\label{BMZ}Let $J$ be a positive integer and $\delta>0$. Then,  there is a finite subset $E_0$ of the torus such that 
\begin{enumerate}
\item For any $(\lambda, \gamma) \in E_0^2,$ $\lambda\neq \gamma$, we have $$\big|\lambda-\gamma\big| \geq \frac{c}{J},$$ where $c$ is some absolute constant.
\item $|E_0| <C \delta^{-2},$ for some constant $C>0$. 
\item $\Big\{\lambda~~:~~ \ds \Big|\sum_{1 \leq n \leq J}\psi(n)e^{2\pi i n \lambda}\Big| > \delta J\Big\} \subset E_0\Big(\frac{c}{J}\Big),$ where $c$ is the constant in (1).
\end{enumerate} 
\end{prop}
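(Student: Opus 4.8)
The plan is to recognize Proposition \ref{BMZ} as a packaging of the classical large sieve inequality for well-spaced frequencies together with a routine pigeonhole extraction, and to carry out the large sieve step by the $TT^{*}$/duality method with a Fej\'er majorant. Throughout write $S_{\psi}(\lambda)=\sum_{1\le n\le J}\psi(n)e^{2\pi i n\lambda}$ and $\Lambda_\delta=\{\lambda\in\T:\ |S_\psi(\lambda)|>\delta J\}$, and regard $\psi$ as a function on $\{1,\dots,J\}$ (only these values enter). We may and do assume $\|\psi\|_\infty\le 1$; this is the normalization under which the proposition is applied in Lemma \ref{Bourgain-F}, and otherwise one simply carries a factor $\|\psi\|_\infty^{2}$ through.

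\emph{Step 1 (construction of $E_0$).} Fix once and for all an absolute constant $c$ (a short computation in Step 3 shows $c=1$ works) and let $E_0\subseteq\Lambda_\delta$ be a maximal subset any two distinct points of which are at distance $\ge c/J$ on $\T$. Item (1) holds by construction, and $E_0$ is finite since it is $c/J$-separated in a compact group. By maximality every $\lambda\in\Lambda_\delta$ is within $c/J$ of some point of $E_0$ (possibly itself), which is precisely item (3) with the same $c$. It remains to prove the quantitative bound $|E_0|<C\delta^{-2}$ of item (2). Since $E_0\subseteq\Lambda_\delta$, each $|S_\psi(\lambda)|^{2}>\delta^{2}J^{2}$, so
\[
|E_0|\,\delta^{2}J^{2}\;<\;S:=\sum_{\lambda\in E_0}|S_\psi(\lambda)|^{2},
\]
and it suffices to show $S\le C J^{2}$. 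Expanding one copy of $S_\psi$ and interchanging summations, $S=\sum_{m=1}^{J}\overline{\psi(m)}\,G(m)$ with $G(m)=\sum_{\lambda\in E_0}S_\psi(\lambda)e^{-2\pi i m\lambda}$, whence Cauchy--Schwarz gives $S\le\|\psi\|_{\ell^2[1,J]}\,\|G\|_{\ell^2[1,J]}\le\sqrt{J}\,\|G\|_{\ell^2[1,J]}$.

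\emph{Step 2 (the $TT^{*}$/large sieve estimate).} Let $W=\tfrac1J\,\mathbf 1_{[0,2J]}\ast\mathbf 1_{[-2J,0]}$ be the normalized triangle (Fej\'er) kernel on $\Z$, so $W\ge 0$, $W(n)=\tfrac1J(2J+1-|n|)_{+}\ge 1$ for $|n|\le J$, $4J\le\sum_n W(n)=\widehat W(0)\le 9J$, and $\widehat W=\tfrac1J\bigl|\widehat{\mathbf 1_{[0,2J]}}\bigr|^{2}\ge 0$ satisfies $\widehat W(\theta)\le\min\!\bigl(9J,\ \tfrac1{4J\|\theta\|^{2}}\bigr)$, with $\|\theta\|$ the distance to the nearest integer. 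Since $W\ge1$ on $[1,J]$,
\[
\|G\|_{\ell^2[1,J]}^{2}\;\le\;\sum_{n\in\Z}W(n)|G(n)|^{2}\;=\;\sum_{\lambda,\gamma\in E_0}S_\psi(\lambda)\overline{S_\psi(\gamma)}\,\widehat W(\lambda-\gamma).
\]
The diagonal $\lambda=\gamma$ contributes $\widehat W(0)\,S$. For the off-diagonal, bound $|S_\psi(\lambda)\overline{S_\psi(\gamma)}|\le\tfrac12\bigl(|S_\psi(\lambda)|^{2}+|S_\psi(\gamma)|^{2}\bigr)$ and symmetrize, obtaining the bound $S\cdot\max_{\lambda}\sum_{\gamma\in E_0\setminus\{\lambda\}}\widehat W(\lambda-\gamma)$; using the decay of $\widehat W$ together with the $c/J$-separation of $E_0$ (on each side of $\lambda$ the $k$-th nearest point of $E_0$ is at distance $\ge kc/J$), this inner sum is $\le\tfrac1{4J}\sum_{k\ge1}\tfrac{2}{(kc/J)^{2}}=\tfrac{\pi^{2}J}{12c^{2}}$, which for $c=1$ is $\le\tfrac12\widehat W(0)$. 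Hence $\|G\|_{\ell^2[1,J]}^{2}\le\tfrac32\widehat W(0)\,S\lesssim JS$. Combining with Step 1, $S\le\sqrt J\cdot\sqrt{CJS}$, so $S\lesssim J^{2}$, and then $|E_0|<C\delta^{-2}$, which is (2).

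The one genuinely delicate point is Step 2 — the $TT^{*}$ estimate — and within it the quantitative choice of the separation constant $c$ ensuring that the off-diagonal mass of the Fej\'er kernel is strictly dominated by its diagonal mass; this is exactly where the well-spacedness of $E_0$ (item (1)) is consumed, and it is the structural reason (1) and (2) must be proved together rather than separately. Everything else is routine: the majorant $W$ is the standard Fej\'er kernel, and the extraction in Step 1 and the annulus-counting in Step 2 are elementary. Alternatively, Step 2 may be quoted directly as the Montgomery--Vaughan form of the large sieve inequality, $\sum_{r}|S_\psi(\lambda_r)|^{2}\le(J+\delta_0^{-1})\|\psi\|_{2}^{2}$ for $\delta_0$-spaced $\lambda_r$, applied with $\delta_0=c/J$ and $\|\psi\|_2^2\le J$.
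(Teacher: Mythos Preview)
Your proof is correct, but it follows a genuinely different route from the paper's. The paper first fixes a Marcinkiewicz--Zygmund sampling family $(\lambda_{J,j})_{j=0}^{J}$ (i.e.\ a $c/J$--separated net for which the discrete and continuous $L^{2}$ norms of degree-$J$ polynomials are comparable, via the Chui--Zhong characterization), and then takes $E_{0}$ to be a maximal subset of \emph{that fixed net} lying inside $\Lambda_{\delta}$; (1) is inherited from the net, (2) follows from the MZ inequality $\sum_{j}|P_{0}(\lambda_{J,j})|^{2}\le B_{2}J\|P_{0}\|_{2}^{2}$ combined with Parseval ($\|P_{0}\|_{2}^{2}\le J$), and (3) is argued by a replacement/maximality step within the net. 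You instead extract $E_{0}$ directly as a maximal $c/J$--separated subset of $\Lambda_{\delta}$ itself, so that (1) and (3) are immediate, and you obtain (2) by proving the large sieve inequality from scratch via the $TT^{*}$ method with a Fej\'er majorant (equivalently, by quoting Montgomery--Vaughan). Your argument is more self-contained and more in the standard analytic-number-theory style, and it gives explicit constants without appealing to the MZ/Chui--Zhong machinery; the paper's approach, by contrast, makes transparent that the result is really a polynomial sampling/interpolation statement and would extend to other $L^{\alpha}$ via the full range of MZ inequalities.
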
  
\begin{proof}Let us denote by $P_0$ the analytic polynomial given by 
	$$P_0(\lambda)=\sum_{1 \leq n \leq J}\psi(n)e^{2\pi i n \lambda}.$$
By, appealing to Marcinkiewicz-Zygmund interpolation inequalities  \cite[Theorem 7.5, Chapter X, p.28]{Zygmund}, if $\lambda_{J,j},j =0,\cdots,J$ are the $(J + 1)$-roots of the unity. Then, for
$\alpha > 1$ and for any  analytic 
polynomial $P$ of degree smaller or equal than $J$, we have 
\begin{align}\label{MZ}
\quad \quad \quad \quad \notag \\\frac{A_{\alpha}}{J}\sum_{j=0}^{J}\big|P(e^{2\pi i\lambda_{J,j}})\big|^{\alpha}
\leq \int_{\T}\Big|P(\lambda)\Big|^{\alpha} dz \leq \frac{B_{\alpha}}{J}\sum_{j=0}^{n-1}\big|P(e^{2\pi i\lambda_{J,j}})\big|^{\alpha},
\end{align}
where  $A_{\alpha}$ and $B_{\alpha}$ are independent of $J$ and $P$. According, a finite family of $(\lambda_{J,j})_{j=0,\cdots,J},$ is said to be a Marcinkiewicz-Zygmund family ({\it{MZ family}}) if \eqref{MZ} holds for it. Chui and Zhong \cite{CZ} proved that $(\lambda_{J,j})_{j=0,\cdots,J},$ is a MZ family if and only if the sample points  $(\lambda_{J,j})_{j=0,\cdots,J},$ satisfies 
$$\min_{0 \leq j<k \leq J}\Big|\lambda_{J,j}-\lambda_{J,k} \Big| \geq \frac{c}{J},$$
for some positive constant $c$. \\
Let $(\lambda_{J,j})_{j=0,\cdots,J},$ be a MZ family and $E_0$ be the maximal subset of it such that 
$$E_0 \subset \Big\{\lambda~~:~~ \ds \Big|P_0(\lambda)\Big| > \delta J\Big\}.$$ 
Then, obviously $E_0$ satisfy the condition $(1)$. To check that the condition $(2)$ is fulfilled, it suffices to notice that, by \eqref{MZ} for $\alpha=2$, we have 
\[ \delta^2 J^2|E_0|\leq \sum_{ j \in E_0}|P_0(\lambda_{J,j})||^2 \leq B_2 J \Big\|P_0\Big\|_2^2. \]

Moreover, by Parseval identity, $\big\|P_0\big\|_2^2 \leq J$ since $\psi$ is $1$-bounded. Whence, 
\[|E_0| \leq B_2 \delta^{-2}.\]
Finally, it is easy to check the last condition is satisfied by the maximality of $E_0$. Indeed, assume by contradiction that $(3)$ does not holds. Then, there is $\lambda$ such that $\lambda \in  \Big\{\lambda~~:~~ \ds \Big|P_0(\lambda)\Big| > \delta J\Big\}$ and 
$\big|\lambda-\lambda_{J,j}\big| \geq \frac{c}{J},$ for each $j \in E_0$. Therefore, either $\big|\lambda-\lambda_{J,j}\big| \geq \frac{c}{J},$ for each $j \in E_0^c$ and  we can thus add $\lambda$ to the family or for some for each $j \in E_0^c$, we have $\big|\lambda-\lambda_{J,j}\big| < \frac{c}{J}.$ In this case, we replace  $\lambda_{J,j} \in E_0^c$ by $\lambda$ in the family  $(\lambda_{J,j})_{j=0,\cdots,J}.$  In both cases we reach a contradiction since $E_0$ is maximal.     
\end{proof}

\noindent Proposition \ref{BMZ} is stated in \cite{Bourgain-D} as equation $(3.4)$ and $(6.1)$. The proof of Lemma \ref{Bourgain-F} is based also on the following 
Local Partition Lemma. Its proof is based on the nice properties of de la Vall\'ee-Poussin Kernel. We recall that de la Vall\'ee-Poussin Kernel $V_{(n,p)}$ of order $n+p$  is given by  

$$V_{(n,p)}(x)=\frac{n+p}{p}K_{n+p}(x)-\frac{n}{p}K_n(x),$$
where $K_n$ is Fej\`{e}r kernel. This later kernel is defined by
\begin{align}\label{FK}
K_n(x)=\sum_{j=-n}^{n}\Big(1-\frac{|j|}{n+1}\Big)e^{2\pi i j x}=\frac1{n+1}\Bigg\{ \frac{\sin\big( \pi (n+1) x\big)}
{\sin(\pi x)} \Bigg\}^2.
\end{align}\label{FVP}
Therefore 
\begin{align}\label{VVP}
	V_{(n,p)}(x)=\frac{1}{p}\frac{\big(\sin\big(\pi (n+p)x\big)\big)^2-\big(\sin\big(\pi n x\big)\big)^2}{(\sin(\pi x))^2}.
\end{align}
We further have $\widehat{V_{n,p}}(j)=1$, if $|j| \leq n$. 
We can associate to $V_{n,p}$ a compactly supported piecewise linear function $v_{n,p} $ on $\R$ given by 
\[v_{n,p}(t)=\begin{cases}
1 & \text{if } |t| \leq n\\
\frac{n+p-|t|}{p}  & \text{if } n \leq |t| \leq n+p\\
0 & \text{if } |t| \geq n+p
\end{cases}\]
The connection between those two families of functions is given by 
\begin{align}
	V_{(n,p)}(x)&=V_{(r m,(s-r) m)}(x)\\
	&= \sum_{j=-sm+1}^{sm-1} v_{r,s-r}\Big(\frac{j}{N}\Big)e^{2 \pi i j x},
\end{align}
where $m=\gcd(n,n+p)$,  $n= rm$ and $n+p=sm$.
 Notice that the classical de la Vall\'ee-Poussin kernel is given by 
\begin{align}
	V_{(n,n)}(x)&=2K_{2n}(x)-K_n(x)\\
	&=\sum_{ |j| \leq n}e^{2\pi i jx}+\sum_{ n <|j|< 2n}\Big(2-\frac{|j|}{N}\Big)e^{2\pi i jx}.
\end{align}
Moreover, as suggested by Bourgain in \cite[eq. (5.5), p.149]{Bourgain-D}, for any $\gamma>0$ and a family of the de la Vall\'ee-Poussin $V_{(n, \lfloor\gamma. n \rfloor)}$ kernel, the family of functions $v_{r,s-r}$ can be made smooth such that for any $M>\gamma^{-1}$, there is $D>1$ such that 
\begin{align}\label{SmoothB}
	\int_{\big\{|x| > \frac{M}{2.n+1}\big\}} | V_{(n, \lfloor \gamma. n \rfloor)}(x)| dx < C \gamma^{-1}\big(\gamma.M\big)^{-D}.
\end{align}
In connection with Bourgain's observation we have the following lemma.
\begin{lem}Let $n,D$ be a positive integers with $D>2$, $\gamma>0$ and $M >\gamma^{-1}$. Let $I$ be a finite interval of integers and $V_{(n,p)}$ be the associated de la Vall\'ee-Poussin kernel with $p > \gamma^{D+1}M^{D-2}|I|^2$ and $\textrm{supp}(v_{n,p}) \subset I$.  Then,  we have
\begin{align}
	\int_{\big\{|x| > \frac{M}{|I|} \big\}} | V_{(n,p)}(x)| dx < C \gamma^{-1}\big(\gamma.M\big)^{-D},
\end{align}
where $C$ is an absolute constant.
\end{lem}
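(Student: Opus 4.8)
The plan is to dispense with the smoothing and rescaling machinery altogether and to bound $|V_{(n,p)}|$ pointwise from the closed form \eqref{VVP}, then integrate by hand. The only delicate point is the bookkeeping: the powers of $M$ and the spurious powers of $\gamma$ produced by the elementary estimate must reassemble precisely into the target $\gamma^{-1}(\gamma M)^{-D}$.

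First I would rewrite the numerator in \eqref{VVP} by the identity $\sin^2A-\sin^2B=\sin(A+B)\sin(A-B)$ applied with $A=\pi(n+p)x$ and $B=\pi n x$, obtaining
\[
V_{(n,p)}(x)=\frac1p\cdot\frac{\sin\bigl(\pi(2n+p)x\bigr)\,\sin(\pi p x)}{\sin^2(\pi x)},
\qquad\text{hence}\qquad
\bigl|V_{(n,p)}(x)\bigr|\le\frac1{p\,\sin^2(\pi x)}\quad(x\notin\Z).
\]
Since $V_{(n,p)}$ is a trigonometric polynomial it is $1$-periodic, and as $|V_{(n,p)}|$ is periodic and nonnegative the integral on the left-hand side of the asserted inequality is to be read over one period, that is, over $\{x\in(-\tfrac12,\tfrac12]:|x|>M/|I|\}$. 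If $M/|I|\ge\tfrac12$ this set is null and there is nothing to prove, so I may assume $|I|>2M$.

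Then I would use the elementary bound $|\sin(\pi x)|\ge 2|x|$, valid for $|x|\le\tfrac12$, to estimate
\[
\int_{M/|I|<|x|\le 1/2}\bigl|V_{(n,p)}(x)\bigr|\,dx
\;\le\;\frac1{4p}\int_{M/|I|<|x|\le 1/2}\frac{dx}{x^2}
\;<\;\frac{|I|}{2pM},
\]
and finally substitute the hypothesis $p>\gamma^{D+1}M^{D-2}|I|^2$ and invoke $|I|>2M$ once more:
\[
\frac{|I|}{2pM}\;<\;\frac1{2\gamma^{D+1}M^{D-1}|I|}\;<\;\frac1{4\gamma^{D+1}M^{D}}\;=\;\frac14\,\gamma^{-1}(\gamma M)^{-D}.
\]
This is exactly the asserted inequality with $C=\tfrac14$ (and with any larger absolute constant, e.g. $C=1$, as well).

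It is worth noting that with this argument neither the hypothesis $\textrm{supp}(v_{n,p})\subset I$ nor the restriction $D>2$ is actually used; they are present only so that the lemma can be fed into the Local Partition Lemma at the correct scale, and the single contribution of the de la Vall\'ee-Poussin structure is the closed form \eqref{VVP}. I therefore expect no genuine analytic obstacle: the one place where care is needed is the last display, where one must check that the two applications of $|I|>2M$ together with the substitution for $p$ yield exactly the exponent $M^{-D}$ and not $M^{-(D-1)}$ or $M^{-(D+1)}$. A heavier alternative, closer to Bourgain's own computation, would pass to the rescaled piecewise-linear profile $v_{r,s-r}$ through the Fourier-series identity for $V_{(rm,(s-r)m)}$ and quote the smoothing estimate \eqref{SmoothB}; the direct computation above is shorter and self-contained.
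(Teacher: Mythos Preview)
Your proof is correct and follows essentially the same approach as the paper: both bound $|V_{(n,p)}(x)|$ pointwise from the closed form \eqref{VVP}, use $|\sin(\pi x)|\ge 2|x|$ on $|x|\le\tfrac12$, and then substitute the hypothesis $p>\gamma^{D+1}M^{D-2}|I|^2$. The only cosmetic difference is that the paper bounds the integrand by its maximum on the region (arriving at $|I|^2/(2pM^2)$ and substituting directly), whereas you actually perform the integration (arriving at the sharper $|I|/(2pM)$) and then invoke $|I|>2M$ once more; both yield the target bound with an absolute constant.
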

\begin{proof}By \eqref{VVP}, we have 
\begin{align}\label{L1F1}
&\int_{\big\{|x| > \frac{M}{|I|} \big\}} | V_{(n,p)}| dx\\
	&\leq 
\int_{\big\{|x| > \frac{M}{|I|} \big\}} \frac{1}{p}\frac{\Big|\big(\sin\big(\pi (n+p)x\big)\big)^2-\big(\sin\big(\pi n x\big)\big)^2\Big|}{(\sin(\pi x))^2} dx 
\notag\\
&\leq \int_{\big\{|x| > \frac{M}{|I|} \big\}} \frac{1}{p}\frac{2}{(\sin(\pi x))^2} dx
\end{align}	
Now, remembering that $\sin(\frac{\tau}2)>\frac{\tau}{\pi}$ for any $0<\tau<\pi$, we obtain
\begin{align}\label{L1F2}
\int_{\big\{|x| > \frac{M}{|I|} \big\}} | V_{(n,p)}(x)| dx	&\leq 
\frac{1}{p}.\frac{1}{\frac{2.M^2}{|I|^2}} \\
&\leq C \gamma^{-1}\big(\gamma.M\big)^{-D}.
\end{align}
The last inequality is due to the fact that $p > \gamma^{D+1}M^{D-2}|I|^2$, and the proof of the lemma is complete.
\end{proof}

We state now the Local Partition Lemma of Bourgain and we present its proof. 
\begin{lem}[Local Partition Lemma of Bourgain (LP)]\label{Partition}  Let $I$ be a finite interval of integers and $P(x)$ be a trigonometric polynomial such that,
	$P(x)=\sum_{ n \in I}g_n e^{2 \pi i n x}$ with $|g_n| \leq 1$, for each $n \in I.$  Let $\Big\{I_{\alpha}\Big\}_{\alpha \in A}$ be a partition of $I$ in subintervales. Put
	$$P_{\alpha}(x) =\sum_{ n  \in I_{\alpha}}g_n e^{2 \pi i n x},$$
and let $E$ be a finite set in the torus identify with $(0,1]$. Then, for any $R>1$, we have
\begin{align}\label{B-partition}
&\int_{E(|I|^{-1})}\big|P(x)\big|^2 dx \notag 
\\
&\leq \sum_{ \alpha} \int_{E(R.|I_{\alpha}|^{-1})}\big|P_{\alpha}(x)\big|^2 dx+CR^{-D}|E||I|+CR^{-\frac14}|I|,
\end{align}
where $D$ is an arbitrarily chosen exponent and the constant $C$ depends on $D$.
\end{lem}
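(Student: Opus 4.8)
The plan is to decompose the polynomial $P$ against the de la Vall\'ee-Poussin kernels attached to the intervals $I_\alpha$, exploiting the fact that $\widehat{V_{(n,p)}}(j)=1$ on the central block and that $V_{(n,p)}$ has an $L^1$-tail that decays like $\gamma^{-1}(\gamma M)^{-D}$ once $p$ is taken large (the quantitative statement is exactly the preceding lemma). Concretely, for each $\alpha$ I would fix a de la Vall\'ee-Poussin kernel $V_\alpha := V_{(n_\alpha,p_\alpha)}$ with $v_{n_\alpha,p_\alpha}\equiv 1$ on $I_\alpha$, supported in a slightly enlarged interval $\widetilde I_\alpha$ (still of length comparable to $|I_\alpha|$), and with $p_\alpha$ chosen of the order $\gamma^{D+1}M^{D-2}|I_\alpha|^2$ with $M \sim R$, so that the tail bound of the previous lemma applies on the set $\{|x|>R/|I_\alpha|\}$. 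Then $P_\alpha = P_\alpha * V_\alpha$ pointwise on the Fourier side restricted to $I_\alpha$, i.e. $P_\alpha = \widehat{V_\alpha}\cdot\widehat{P_\alpha}$ at the frequency level; the point is that one may replace $P_\alpha$ by its convolution with $V_\alpha$ for free, and the benefit is that $V_\alpha$ concentrates at scale $|I_\alpha|^{-1}$.

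The first main step is to pass from $\int_{E(|I|^{-1})}|P|^2$ to $\sum_\alpha \int_{E(|I|^{-1})}|P_\alpha|^2$ plus cross terms; here I would split $P=\sum_\alpha P_\alpha$, expand the square, and handle the off-diagonal terms $\int P_\alpha \overline{P_\beta}$, $\alpha\ne\beta$, by the near-orthogonality coming from the frequency separation of $I_\alpha$ and $I_\beta$ — after convolving with the $V_\alpha$'s this separation becomes an honest decay estimate, and summing the resulting geometric-type series produces the error $CR^{-1/4}|I|$ (the exponent $1/4$ reflecting the usual loss in a Schur-test/almost-orthogonality argument on a short interval of length $|I|$). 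The second main step is to enlarge the neighbourhood on the diagonal: replace $\int_{E(|I|^{-1})}|P_\alpha|^2$ by $\int_{E(R|I_\alpha|^{-1})}|P_\alpha|^2$. Since $|I|^{-1}\le R|I_\alpha|^{-1}$ this is almost immediate on $E(R|I_\alpha|^{-1})$ itself; the content is to control the \emph{remaining} mass of $|P_\alpha|^2$, and this is where I would write $P_\alpha = (P_\alpha\text{ restricted}) * V_\alpha$ and estimate $\int_{\T\setminus E(R|I_\alpha|^{-1})}|P_\alpha * V_\alpha|^2$ using the $L^1$-tail bound $\|V_\alpha\|_{L^1(\{|x|>R/|I_\alpha|\})}\lesssim \gamma^{-1}(\gamma R)^{-D}$ together with $\|P_\alpha\|_2^2\le |I_\alpha|$ (Parseval, $|g_n|\le1$); summing over $\alpha$ and over the lattice of translates counted by $|E|$ yields the term $CR^{-D}|E||I|$.

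The main obstacle I anticipate is the bookkeeping in the off-diagonal estimate: one needs the kernels $V_\alpha$ to be simultaneously (i) equal to $1$ on $I_\alpha$ so that $P_\alpha$ is genuinely unchanged, (ii) supported close enough to $I_\alpha$ that the supports $\widetilde I_\alpha$ have bounded overlap (so the cross terms really do decay in $|\alpha-\beta|$), and (iii) smooth enough that the tail decay holds with the large exponent $D$. Balancing these forces the constraint $p_\alpha \gtrsim \gamma^{D+1}R^{D-2}|I_\alpha|^2$ from the previous lemma, and one must check this is compatible with $\mathrm{supp}(v_{n_\alpha,p_\alpha})$ staying inside an $O(1)$-dilate of $I_\alpha$ — which is exactly why the hypothesis allows $R$ (hence $M$) to be chosen first and $D$ afterwards. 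Once the kernels are fixed consistently, the remaining estimates are routine applications of Parseval, the $L^1$-tail lemma above, and summation of a geometric series; the two error terms $CR^{-D}|E||I|$ and $CR^{-1/4}|I|$ emerge respectively from the tail of the diagonal pieces (weighted by the number $|E|$ of spectral clusters) and from the almost-orthogonality defect of the off-diagonal pieces.
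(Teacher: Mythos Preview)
Your plan has a genuine gap in the first main step. Expanding $\int_{E(|I|^{-1})}|P|^2=\sum_{\alpha,\beta}\int_{E(|I|^{-1})}P_\alpha\overline{P_\beta}$ and claiming the off-diagonal part is $O(R^{-1/4}|I|)$ by ``near-orthogonality'' does not work. Convolving $P_\alpha$ with a kernel satisfying $\widehat{V_\alpha}\equiv 1$ on $I_\alpha$ changes nothing ($P_\alpha*V_\alpha=P_\alpha$), so no decay is produced. More to the point, the cross terms are \emph{not} small in general: take $E=\{0\}$, $g_n\equiv 1$, and split $I$ into $|A|$ equal pieces. Then the diagonal contribution $\sum_\alpha\int_{-|I|^{-1}}^{|I|^{-1}}|P_\alpha|^2$ is of order $|I|/|A|$, while the left-hand side is of order $|I|$; hence the cross terms themselves are of order $|I|$, independent of $R$. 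The inequality is rescued only because the enlarged neighbourhoods $E(R|I_\alpha|^{-1})$ on the right absorb this mass---so the two ``steps'' you separate are in fact inseparable, and neither error term arises the way you describe.

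The paper's argument avoids cross terms entirely by dualizing. One picks, via the Riesz representation, a unit vector $\phi\in L^2$ supported in $E(|I|^{-1})$ with $\langle P,\phi\rangle=\|P\|_{L^2(E(|I|^{-1}))}$, then projects $\phi$ (not $P$) onto frequencies $I_\alpha$ to get $\phi_\alpha$, so that $\langle P,\phi\rangle=\sum_\alpha\langle P_\alpha,\phi_\alpha\rangle$ and, crucially, $\sum_\alpha\|\phi_\alpha\|_2^2\le 1$ by Parseval. The de la Vall\'ee--Poussin kernels are chosen with $\mathrm{supp}(\widehat{V_\alpha})\subset I_\alpha$ (so $\widehat{V_\alpha}$ equals $1$ only on a $(1-\gamma)$-fraction of $I_\alpha$, which is why $\|P_\alpha-P_\alpha*V_\alpha\|_2\lesssim\sqrt{\gamma|I_\alpha|}$), and are applied to $\phi$ via $\psi_\alpha=\phi*V_\alpha$. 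Because $\phi$ lives on $E(|I|^{-1})$ and $V_\alpha$ has $L^1$-tail $\lesssim\gamma^{-1}(\gamma R)^{-D}$ beyond scale $R/|I_\alpha|$, the mass of $\psi_\alpha$ outside $E(R|I_\alpha|^{-1})$ is small; this is the source of the $R^{-D}|E||I|$ term. The $R^{-1/4}|I|$ term comes from choosing $\gamma=R^{-1/4}$ to control the transition-zone error $\sum_\alpha\sqrt{\gamma|I_\alpha|}$, not from any off-diagonal sum. You should restructure around this duality; the direct square-expansion route does not close.
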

\begin{proof}By Riesz representation theorem,  there exist $\phi \in L^2(\T)$ such that
\begin{enumerate}
\item \label{R1}$\big\|\phi\big\|_2=1,$
\item \label{R2}$\textrm{supp}(\phi) \subset E(|I|^{-1})$,
\item \label{R3} $\ds \int_{E(|I|^{-1})} P(x) \overline{\phi}(x) dx=\Big\|P|_{E(|I|^{-1})}\Big\|_2.$
\end{enumerate}
Let $(V_\alpha)_{\alpha \in A}$ be a family of the smooth de la Vall\'ee-Poussin kernel which satisfy for each $\alpha \in A$, 
\begin{enumerate}
	\item $\textrm{supp}(\widehat{V_\alpha}) \subset I_\alpha,$
 	\item \eqref{SmoothB} with $\gamma \in (0,\frac{1}{10}).$
\end{enumerate} 
Put
$$Q_\alpha=P_\alpha*V_\alpha \textrm{~~~~~~~and~~~~~~~~~} \psi_\alpha=\phi_\alpha*V_\alpha,$$
where $\phi_\alpha=\F(\F^{-1}(\phi)|_{I_\alpha}).$ Notice that we have $\psi_\alpha=\phi*V_\alpha,$  
$\big\|\phi_\alpha\big\|_2 \leq 1,$ and by the convolution theorem $\big\|\psi_\alpha\big\|_2 \leq \big\|\phi\|_2=1.$
Moreover, by proprieties \ref{R3} of $\phi$ and the definition of $P_\alpha$ and $\phi_\alpha$, we have
\begin{align}\label{PB1}
\Big\|P|_{E(|I|^{-1})}\Big\|_2&=	<P(x), \phi> \notag\\
&=\sum_{ \alpha \in A} \sum_{ n \in I_\alpha} g_n \widehat{\phi}(n) \notag\\
&= \sum_{ \alpha \in A} \sum_{n \in \Z} g_n \widehat{\phi}(n) \mathbbm{1}_{I_\alpha}(n) \notag\\
&=\sum_{ \alpha \in A} \sum_ {n \in  \Z} \widehat{P_\alpha}(n) \widehat{\phi}|_{|I_\alpha}(n) \notag\\
&=\sum_{ \alpha \in A}<P_\alpha,\phi_\alpha> \notag\\
&\leq \sum_{ \alpha \in A}\Big|<P_\alpha,\phi_\alpha>\Big|
\end{align}

For the estimation of $\Big\|P|_{E(|I|^{-1})}\Big\|_2$, we are going to replace up to some errors $\Big|<P_\alpha,\phi_\alpha>\Big|$ with $\Big|<Q_\alpha,\phi_\alpha>\Big|$. We start with the estimation of the error.

Applying Parserval identity combined with the definition and the properties of $P_\alpha$ and $Q_\alpha$, we get  
\begin{align}\label{PB2}
\Big\|P_\alpha-Q_\alpha\Big\|_2^2&=\Big\|P_\alpha-P_\alpha*V_\alpha\Big\|_2^2 \notag\\
&=\sum_{ n \in I_\alpha}\Big|\widehat{\Big(P_\alpha-P_\alpha*V_\alpha\Big)}(n)\Big|^2 \notag \\
&=\sum_{ n \in I_\alpha}\Big|\widehat{P_\alpha}(n)\big(1-\widehat{V_\alpha}(n)\big)\Big|^2
\end{align}
But, by our assumption, we can write $I_\alpha= I_\alpha^1 \bigcup I_\alpha^2,$ with $I_\alpha^1 \bigcap I_\alpha=\emptyset$ and $|I_\alpha^2|=\gamma. |I_\alpha|$. This combined with the nice proprieties of $V_\alpha$  and the $1$-boundness of $(g_n)$ gives
\begin{align}\label{PB3}
\Big\|P_\alpha-Q_\alpha\Big\|_2^2
&=\sum_{ n \in I_\alpha^{2}}\Big|\widehat{P_\alpha}(n)\big(1-\widehat{V_\alpha}(n)\big)\Big|^2\\
&< 4 \gamma |I_\alpha|.
\end{align}
Whence 
\begin{align}\label{PB4}
\Big\|P_\alpha-Q_\alpha\Big\|_2  < 2 \sqrt{\gamma |I_\alpha|}.
\end{align}
Applying Cauchy-Schwarz inequality and taking into account the $1$-boundness of $\|\phi_\alpha\|_2$, we obtain
\begin{align}\label{PB-error}
\Big|\langle P_\alpha,\phi_\alpha\rangle-\langle Q_\alpha,\phi_\alpha\rangle\Big| &\leq \Big\|P_\alpha-Q_\alpha\Big\|_2 \big\|\phi_\alpha\big\|_2 \notag\\
& \leq 2 \sqrt{\gamma |I_\alpha|}.
\end{align}

Now, we proceed to the estimation of $\langle Q_\alpha,\phi_\alpha\rangle$.  By the definition of $Q_\alpha, \phi_\alpha$ and $\psi_\alpha$, we have
\begin{align}\label{PB5}
\langle Q_\alpha,\phi_\alpha\rangle &=\langle P_\alpha*V_\alpha,\phi_\alpha\rangle\notag \\
&=\sum_{ n \in I_\alpha} \widehat{P_\alpha}(n) \widehat{V_\alpha}(n) \overline{\widehat{\phi_\alpha}(n)}\notag \\
&= \langle P_\alpha,\phi_\alpha*V_\alpha\rangle \notag \\
&=\langle P_\alpha,\psi_\alpha\rangle. 
\end{align}
We can thus write 
\begin{align}\label{PB6}
\langle Q_\alpha,\phi_\alpha\rangle&=\langle P_\alpha (\1_{E(|I|^{-1})+\1_{E(|I|^{-1})^c}}),\psi_\alpha \rangle \notag \\
 &= \langle P_\alpha \1_{E(|I|^{-1})},\psi_\alpha \rangle + \langle P_\alpha \1_{E(|I|^{-1})^c},\psi_\alpha \rangle 
\end{align}
This combined with Cauchy-Schwarz inequality gives
\begin{align}\label{PB7}
\Big|\langle Q_\alpha,\phi_\alpha\rangle\Big| \leq \big\| P_\alpha \1_{E\big(\frac{R}{|I_\alpha|}\big)}\big\|_2 \big\|\psi_\alpha \big\|_2+ \big\| P_\alpha \big\|_2  \big\| \psi_\alpha\1_{E\big(\frac{R}{|I_\alpha|}\big)^c} \big\|_2.
\end{align}
Applying again Cauchy-Schwarz inequality combined with \eqref{R1} and \eqref{R2} we obtain, for any $\theta \in (0,1]$
\begin{align}\label{PB8}
\big|\psi(\theta)\big|&=\Big|\int_{E(|I|^{-1})}\phi(t) V_\alpha(\theta-t) dt\Big|\notag\\
&\leq \Big(\int_{E(|I|^{-1})}  \big| V_\alpha(\theta-t) \big|^2dt\Big)^{\frac12}\notag\\
&\leq  \big\| V_\alpha \big\|_2=1.
\end{align}
Therefore
\begin{align}\label{PB9}
\big\| \psi_\alpha\1_{E\big(\frac{R}{|I_\alpha|}\big)^c} \big\|_2^2&=\int \big|\psi_\alpha(\theta)\big|^2\1_{E\big(\frac{R}{|I_\alpha|}\big)^c}(\theta) d\theta\notag \\
&\leq  \int \big|\psi_\alpha(\theta)\big|\1_{E\big(\frac{R}{|I_\alpha|}\big)^c}(\theta) d\theta \notag \\
& \leq \int \1_{E\big(\frac{R}{|I_\alpha|}\big)^c}(\theta) \int_{E(|I|^{-1})}|\phi(t)||V_\alpha(\theta-t)| dt
\end{align} 
But, assuming $t \in E(|I|^{-1})$, we get that  $\theta \in E\big(\frac{R}{|I_\alpha|}\big)^c$ implies $|\theta-t| > \frac{R}{2 |I_\alpha|} .$ If not, since for some $\lambda \in E$, we have $|\lambda-t| <\frac{1}{|I|}< \frac{R}{2 |I_\alpha|}$, by the triangle inequality, we get $|\lambda-\theta|< \frac{R}{2 |I_\alpha|}$ which  gives a contradiction.  We can thus rewrite \eqref{PB8} as follows
\begin{align}\label{PB10}
\big\| \psi_\alpha\1_{E\big(\frac{R}{|I_\alpha|}\big)^c} \big\|_2^2
 \leq \int_{E(|I|^{-1})}|\phi(t)| \int_{\{|\theta-t| > \frac{R}{2 |I_\alpha|}\}} |V_\alpha(\theta-t)| dt
\end{align} 
Applying \eqref{SmoothB} and Cauchy-Schwarz inequality to get
\begin{align}\label{PB11}
\big\| \psi_\alpha\1_{E\big(\frac{R}{|I_\alpha|}\big)^c} \big\|_2^2
& \leq \int_{E(|I|^{-1})}|\phi(t)| \int_{\{|\theta-t| > \frac{R}{2 |I_\alpha|}\}} |V_\alpha(\theta-t)| dt \notag\\
&\leq \int_{E(|I|^{-1})}|\phi(t)| dt \Big(C. \gamma^{-1} (\gamma.R)^{-D}\Big) \notag \\
&\leq \Big(\text{Leb}(E(|I|^{-1}))\Big)^{\frac12} \Big(C. \gamma^{-1} (\gamma.R)^{-D}\Big) \notag\\
&\leq \Big(|E|.|I|^{-1}\Big)^{\frac12} \Big(C. \gamma^{-1} (\gamma.R)^{-D}\Big),
\end{align} 
where $\text{Leb}$ stand for Lebesgue measure on the circle.
Summarizing, we get
\begin{align}\label{PB-final1}
\Big\|P|_{E(|I|^{-1})}\Big\|_2 &\leq \sum_{ \alpha \in A} \Big|\langle Q_\alpha,\phi_\alpha\rangle\Big|+	2 \sqrt{\gamma} \sum_{ \alpha \in A}\sqrt{|I_\alpha|} \notag\\
&\leq  \sum_{ \alpha \in A}\big\| P_\alpha \1_{E\big(\frac{R}{|I_\alpha|}\big)}\big\|_2 \big\|\phi_\alpha\big\|_2\\&+ \Big(|E|.|I|^{-1}\Big)^{\frac12} \Big(C. \gamma^{-1} (\gamma.R)^{-D}\Big)^{\frac12} 
\sum_{ \alpha \in A} \big\|P_\alpha\big\|_2 \notag \\&+ 	2 \sqrt{\gamma} \sum_{ \alpha \in A}\sqrt{|I_\alpha|}\notag
\end{align}
by \eqref{PB1}, \eqref{PB-error},\eqref{PB7} and \eqref{PB11}. Applying again Cauchy-Schwarz inequality we see that 
\begin{align*}
	\sum_{ \alpha \in A}\sqrt{|I_\alpha|} &\leq \sqrt{|I|}\\
	\sum_{ \alpha \in A}\big\| P_\alpha \1_{E\big(\frac{R}{|I_\alpha|}\big)}\big\|_2 \big\|\phi_\alpha\big\|_2
	&\leq \Big(\sum_{ \alpha \in A}\big\| P_\alpha \1_{E\big(\frac{R}{|I_\alpha|}\big)} \big\|^2\Big)^{\frac12} \\	
	\sum_{ \alpha \in A} \big\|P_\alpha\big\|_2 &\leq  \sqrt{|I|},
\end{align*}
since, by our assumption, $\ds \sum_{ \alpha \in A}\big\|\phi_\alpha\big\|_2^2 \leq 1$ and $\big\|P_\alpha\big\|_2^2 \leq|I_\alpha|,$ for each $\alpha \in A$. Consequently, we can rewrite \eqref{PB-final1} as follows 
\begin{align}\label{PB-final2}
\Big\|P|_{E(|I|^{-1})}\Big\|_2 &\leq \sum_{ \alpha \in A} \Big|\langle Q_\alpha,\phi_\alpha\rangle\Big|+	2 \sqrt{\gamma} \sum_{ \alpha \in A}\sqrt{|I_\alpha|} \notag\\
&\leq  \Big(\sum_{ \alpha \in A}\big\| P_\alpha \1_{E\big(\frac{R}{|I_\alpha|}\big)}\big\|_2^2 \Big)^{\frac12}\\&+ \sqrt{|E|} \Big(C. \gamma^{-1} (\gamma.R)^{-D}\Big)^{\frac12} 
 + 	2 \sqrt{\gamma} \sqrt{|I|}\notag
\end{align}
Taking $\gamma=\frac{1}{\sqrt[4]{R}}$, we obtain
\begin{align}\label{PB-final3}
\Big\|P|_{E(|I|^{-1})}\Big\|_2 
&\leq  \Big(\sum_{ \alpha \in A}\big\| P_\alpha \1_{E\big(\frac{R}{|I_\alpha|}\big)}\big\|_2^2 \Big)^{\frac12}\\&+ \sqrt{|E|} \Big(\sqrt{C}.R^{\frac{1-3D}{8}}\Big) \notag
+ 	2 R^{-\frac18} \sqrt{|I|}\notag
\end{align}
Squaring, we get 
\begin{align}\label{PB-final4}
\Big\|P|_{E(|I|^{-1})}\Big\|_2^2 
&\leq \sum_{ \alpha \in A}\big\| P_\alpha \1_{E\big(\frac{R}{|I_\alpha|}\big)}\big\|_2^2+C |E|  R^{\frac{1-3D}{4}} 
+ 	4 R^{-\frac14} |I|\\&+2 \Big(\sum_{ \alpha \in A}\big\| P_\alpha \1_{E\big(\frac{R}{|I_\alpha|}\big)}\big\|_2^2 \Big)^{\frac12} \sqrt{|E|} \Big(\sqrt{C}.R^{\frac{1-3D}{4}}\Big)\notag\\
&+4 \Big(\sum_{ \alpha \in A}\big\| P_\alpha \1_{E\big(\frac{R}{|I_\alpha|}\big)}\big\|_2^2 \Big)^{\frac12}  R^{-\frac18} \sqrt{|I|} \notag\\
&+4\sqrt{|E|} \Big(\sqrt{C}.R^{\frac{1-3D}{8}}\Big) R^{-\frac18} \sqrt{|I|} \notag
\end{align}
by the virtue of the simple identity $$(a+b+c)^2=a^2+b^2+c^2+2ab+2ac+2bc,~~~ \forall a,b,c \in \R.$$
Now, remembering that 
$$\sum_{ \alpha \in A}\big\| P_\alpha \1_{E\big(\frac{R}{|I_\alpha|}\big)}\big\|_2^2  \leq 
\sum_{ \alpha \in A}\big\| P_\alpha \big\|_2^2 \leq \sum_{ \alpha \in A}\big| I_\alpha| \leq |I|.  $$
We can rewrite \eqref{PB-final4} as follows
\begin{align}\label{PB-final5}
\Big\|P|_{E(|I|^{-1})}\Big\|_2^2 
&\leq \sum_{ \alpha \in A}\big\| P_\alpha \1_{E\big(\frac{R}{|I_\alpha|}\big)}\big\|_2^2+C |E|  R^{\frac{1-3D}{4}} 
+ 	4 R^{-\frac14} |I|\\&+2 \sqrt{|I|} \sqrt{|E|} \Big(\sqrt{C}.R^{\frac{1-3D}{4}}\Big)\notag\\
&+4 \sqrt{|I|} R^{-\frac18} \sqrt{|I|} \notag\\
&+4\sqrt{|E|} \Big(\sqrt{C}.R^{\frac{1-3D}{8}}\Big) R^{-\frac18} \sqrt{|I|} \notag
\end{align}
We thus conclude that 
\begin{align}\label{PB-last}
\Big\|P|_{E(|I|^{-1})}\Big\|_2^2 
\leq \sum_{ \alpha \in A}\big\| P_\alpha \1_{E\big(\frac{R}{|I_\alpha|}\big)}\big\|_2^2+K R^{-D'}  |E||I| 
+ 	8 R^{-\frac14}|I|
\end{align}
for some absolutely positive constant $K$ and an arbitrary  $D'>0$ since $D>1$ is arbitrary and $D'=\frac{1-3D}{8}$. This complete the proof of the lemma.
\end{proof}
\begin{rem}Despite the fact that our proof follows Bourgain's proof, it is slightly different.
\end{rem}
\noindent The LP Lemma of Bourgain is useful in the following form.
\begin{lem}[The $\epsilon$-localization of LP Lemma]\label{PartitionII}  Let $I$ be a finite interval of integers and $P(x)$ be a trigonometric polynomial such that,
	$P(x)=\sum_{ n \in I}g_n e^{2 \pi i n x}$ with $|g_n| \leq 1$, for each $n \in I.$  Let $\Big\{I_{\alpha}\Big\}_{\alpha \in A}$ be a partition of $I$ in subintervales such that $|I_\alpha|< \epsilon |I|$ with $0<\epsilon <\frac{1}{10}$. Put
	$$P_{\alpha}(x) =\sum_{ n  \in I_{\alpha}}g_n e^{2 \pi i n x},$$
	and let $E$ be a finite set in the torus identify with $(0,1]$. Then, for any $R>1$, we have
	\begin{align}\label{B-partitionII}
	&\int_{E(R|I|^{-1})}\big|P(x)\big|^2 dx \notag 
	\\
	&\leq \sum_{ \alpha} \int_{E(R.|I_{\alpha}|^{-1})}\big|P_{\alpha}(x)\big|^2 dx+CR^{-D}|E||I|+CR^{-\frac14}|I|,
	\end{align}
	where $D$ is an arbitrarily chosen exponent and the constant $C$ depends on $D$.
\end{lem}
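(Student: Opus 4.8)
The plan is to obtain Lemma~\ref{PartitionII} as a quick consequence of the Local Partition Lemma~\ref{Partition}, the hypothesis $|I_\alpha|<\epsilon|I|$ (with $\epsilon<\tfrac1{10}$) being used exactly once — to keep the enlargement of the small neighbourhoods $E(R|I_\alpha|^{-1})$ within a bounded factor. We may assume $R>2$: for $1<R\le 2$ the estimate \eqref{B-partitionII} is trivial, since $\int_\T|P|^2=\sum_{n\in I}|g_n|^2\le|I|$ is dominated by $CR^{-\frac14}|I|$ once $C\ge 2$.

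First I would replace the ``fat'' neighbourhood $E(R|I|^{-1})$ on the left of \eqref{B-partitionII} by a ``thin'' neighbourhood of a slightly larger finite set. For each $\lambda\in E$ put into $E'$ the points $\lambda+k|I|^{-1}$ with $|k|\le\lceil R\rceil$; then $E'$ is finite with $|E'|\le 5R|E|$, one has $E'\subset E(2R|I|^{-1})$, and, since consecutive net points are at distance $|I|^{-1}$, also $E(R|I|^{-1})\subset E'(|I|^{-1})$. (If $R|I|^{-1}\ge\tfrac12$ the neighbourhoods wrap around the circle and one simply takes $E'$ to be an $|I|^{-1}$-net of all of $\T$, which still has $O(R)$ points.) Consequently $\int_{E(R|I|^{-1})}|P|^2\le\int_{E'(|I|^{-1})}|P|^2$.

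Next I would apply Lemma~\ref{Partition} to the set $E'$, the same trigonometric polynomial $P$ and the same partition $\{I_\alpha\}$, but with the exponent $D+1$ in place of $D$ and with $R/2$ in place of $R$ (legitimate since $R/2>1$). This gives
\[
\int_{E'(|I|^{-1})}|P|^2\le\sum_\alpha\int_{E'(\frac{R}{2}|I_\alpha|^{-1})}|P_\alpha|^2
+C\Big(\tfrac{R}{2}\Big)^{-(D+1)}|E'|\,|I|+C\Big(\tfrac{R}{2}\Big)^{-\frac14}|I|.
\]
Since $|E'|\le 5R|E|$, the first error term is at most $CR^{-D}|E||I|$ and the second at most $CR^{-\frac14}|I|$, after renaming $C$ (which is allowed to depend on $D$). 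It then remains to absorb the right-hand sum into $\sum_\alpha\int_{E(R|I_\alpha|^{-1})}|P_\alpha|^2$, and this is where the localization enters: from $E'\subset E(2R|I|^{-1})$ we get $E'(\tfrac{R}{2}|I_\alpha|^{-1})\subset E\big(2R|I|^{-1}+\tfrac{R}{2}|I_\alpha|^{-1}\big)$, while $|I_\alpha|<\epsilon|I|<\tfrac{|I|}{4}$ forces $2R|I|^{-1}<\tfrac{R}{2}|I_\alpha|^{-1}$, so the total radius is $<R|I_\alpha|^{-1}$ and hence $\int_{E'(\frac{R}{2}|I_\alpha|^{-1})}|P_\alpha|^2\le\int_{E(R|I_\alpha|^{-1})}|P_\alpha|^2$ for each $\alpha$. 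Summing in $\alpha$ yields \eqref{B-partitionII}.

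I do not expect a genuine difficulty: all the real work sits in Lemma~\ref{Partition}, and the only thing to watch is the bookkeeping — the cardinality blow-up from $|E|$ to roughly $R|E|$ must be swallowed by lowering the exponent of Lemma~\ref{Partition}, and the enlargement of $E'(\tfrac{R}{2}|I_\alpha|^{-1})$ to an $E$-neighbourhood must stay below the factor permitted by $\epsilon<\tfrac1{10}$. An equivalent route is to rerun the proof of Lemma~\ref{Partition} verbatim with every occurrence of $E(|I|^{-1})$ replaced by $E(R|I|^{-1})$: the sole estimate that changes is the tail bound \eqref{PB11}, where $\text{Leb}(E(|I|^{-1}))$ is replaced by $\text{Leb}(E(R|I|^{-1}))$, which is about $R|E||I|^{-1}$ — harmless, absorbed by taking $D$ larger — and the geometric separation used just before \eqref{PB10}, namely that $t\in E(R|I|^{-1})$ and $\theta\notin E(R|I_\alpha|^{-1})$ imply $|\theta-t|>\tfrac{R}{2}|I_\alpha|^{-1}$, now goes through precisely because $|I_\alpha|\le|I|/2$.
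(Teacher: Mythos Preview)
Your proof is correct and follows essentially the same route as the paper's: construct a finite set $E'$ with $|E'|\lesssim R|E|$ so that $E(R|I|^{-1})\subset E'(|I|^{-1})$ and $E'\subset E(cR|I|^{-1})$, apply Lemma~\ref{Partition} to $E'$ with $R/2$ in place of $R$, then use the triangle inequality together with $|I_\alpha|<\epsilon|I|$ to absorb $E'(\tfrac{R}{2}|I_\alpha|^{-1})$ into $E(R|I_\alpha|^{-1})$. Your version is in fact a bit more careful than the paper's --- you explicitly build $E'$ as an $|I|^{-1}$-net, you treat the range $1<R\le 2$ separately (needed since Lemma~\ref{Partition} is invoked with parameter $R/2>1$), and you keep track of the exponent shift $D\mapsto D+1$ required to swallow the factor $|E'|\lesssim R|E|$; the paper leaves these details implicit.
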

\begin{proof}We are going to apply Lemma \ref{Partition}. For that let $E'$ be a subset of $E\big(\frac{R}{|I|}\big)$ such that $|E'|<2 R|E|$ and $E\big(\frac{R}{|I|}\big) \subset E'\big(\frac{1}{|I|}\big)$. We thus have, for each $\alpha \in A$, 
	$$ E'\Big(\frac{R}{2|I_\alpha|}\Big) \subset E\Big(\frac{R}{|I_\alpha|}\Big).$$
Indeed, if $\lambda \in E'\big(\frac{R}{2|I_\alpha|}\big)$, then, for some $\xi \in E'$, we have 
$$\Big|\lambda-\xi\Big| < \frac{R}{2|I_\alpha|}.$$
But $E'$ is a a subset of $E\big(\frac{R}{|I|}\big)$. Therefore, for some $\eta \in E$, we have 
$$ \Big|\xi-\eta\Big| <\frac{R}{|I|}<\frac{R}{2|I_\alpha|}.$$
Applying the triangle inequality, we get 
$$\Big|\lambda-\eta\Big| < \frac{R}{|I_\alpha|}.$$  
Now, by appealing  to Lemma \ref{Partition}, we obtain
\begin{align*}
\Big\|P|_{E'(|I|^{-1})}\Big\|_2^2 
\leq \sum_{ \alpha \in A}\big\| P_\alpha \1_{E'\big(\frac{R}{2|I_\alpha|}\big)}\big\|_2^2+K \Big(\frac{R}{2}\Big)^{-D}  |E'||I| 
+ 	8 \Big(\frac{R}{2}\Big)^{-\frac14}|I|	
\end{align*}
Whence
\begin{align*}
\Big\|P|_{E\big(\frac{R}{|I|}\big)}\Big\|_2^2 
\leq \sum_{ \alpha \in A}\big\| P_\alpha \1_{E\big(\frac{R}{|I_\alpha|}\big)}\big\|_2^2+C R^{-D}  |E||I| 
+ 	16 R^{-\frac14}|I|,	
\end{align*}
and this achieve the proof of the lemma.
\end{proof}
The proof of the fundamental lemma (Lemma \ref{Bourgain-F}) is based also on the famous $\lambda$-separated lemmas due to Bourgain. These lemmas are in the heart of Bourgain's method.  Its proof involve  L\'epingle inequalities (also called L\'epingle Lemma 
). We refer to \cite[Lemma 3.3]{Bourgain-IHP} (see also \cite{Thouvenot}). For this later inequalities, one need to introduce the variation norms. Let $(a_n)_{n \in \N}$ be a complex sequence and $s_0$ a positive integer. The variation norm of order $s$ is given by
$$\big\|(a_n)\|_{v_s}=\sup_{ \overset{J}{n_1<n_2<\cdots<n_{J}}}\Big(\sum_{k=1}^{J-1}|a_{n_{k+1}}-a_{n_k}|^s\Big)^{\frac1{s}}.$$

\noindent At this point, we are able to recall the $\lambda$-separated Lemmas.

\begin{lem}[$\lambda$-separated Lemma]\label{lambda-B}Let $\lambda_1,\cdots,\lambda_K$ be a $K$ points on the circle such that $|\lambda_i-\lambda_j| \geq \frac{1}{2^{s-1}}$, $\forall i \neq j$, with $s>0$. Let $f \in L^2[0,1)$. Then, there is an absolute constant $C>0$ such that 
	$$\Bigg\|\sup_{ j >s }\Big|\int_{V_j}e^{2\pi in \alpha} f(\alpha)d\alpha|\Bigg\|_{\ell^2(\Z)}
	\leq C (\log(K))^2 \big\|f\big\|_2,$$
	where $V_j$ is $\frac{1}{2^j}$-neighborhood of $\{\lambda_1,\cdots,\lambda_K\}$ given by
	$$V_j=\Big\{\lambda \in [0,1)/ \min_{1 \leq r \leq K} |\lambda-\lambda_r|<\frac{1}{2^{j}}\Big\}.$$  
\end{lem}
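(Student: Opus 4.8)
The $\lambda$-separated Lemma is a maximal inequality for Fourier projections onto small neighbourhoods of a separated point set, and the natural route is to reduce it to L\'epingle's variational inequality. The plan is to fix the points $\lambda_1,\dots,\lambda_K$ with pairwise separation at least $2^{-(s-1)}$ and, for $f\in L^2[0,1)$, to consider the family of sums $S_j(n)=\int_{V_j}e^{2\pi in\alpha}f(\alpha)\,d\alpha$ indexed by the scales $j>s$. Since the $V_j$ are nested decreasing in $j$, the quantity $S_j(n)-S_{j+1}(n)$ is an integral over the annulus $V_j\setminus V_{j+1}$, and $\sup_{j>s}|S_j(n)|$ is controlled by $|S_{s+1}(n)|$ plus the sum of the increments; it is cleaner, however, to bound $\sup_j|S_j(n)|$ directly by the $v_2$-variation norm of the sequence $(S_j(n))_{j>s}$, which is exactly the object L\'epingle's inequality estimates.

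First I would decompose according to which of the $K$ points a given $\alpha$ is close to: because the $\lambda_r$ are $2^{-(s-1)}$-separated and we only look at scales $j>s$, for each $j$ the neighbourhood $V_j$ is a disjoint union $\bigsqcup_{r=1}^{K}B(\lambda_r,2^{-j})$ of genuinely disjoint arcs. Writing $f_r=f\cdot\mathbbm{1}_{B(\lambda_r,2^{-s})}$, so that $f=\sum_r f_r$ on the relevant set and the $f_r$ have disjoint supports with $\sum_r\|f_r\|_2^2=\|f\|_2^2$, I reduce to the single-point case: it suffices to show $\big\|\sup_{j>s}|\int_{|\alpha-\lambda_r|<2^{-j}}e^{2\pi in\alpha}f_r(\alpha)\,d\alpha|\big\|_{\ell^2(\Z)}\le C\log K\,\|f_r\|_2$ and then sum, using Cauchy--Schwarz over the $K$ indices (this is where one of the two logarithmic factors comes from; the second comes from the L\'epingle bound itself, whose constant in $\ell^2$ carries a $\log$ of the number of scales, and the number of relevant dyadic scales is $O(\log K)$ once one truncates at the trivial scale where $V_j$ has measure comparable to $1/K$).

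For the single-point estimate I would translate $\lambda_r$ to the origin and recognise $\int_{|\alpha|<2^{-j}}e^{2\pi in\alpha}g(\alpha)\,d\alpha$ as a dyadic smoothing of $\widehat{g}$ at frequency $n$, i.e. a martingale-type average of the Fourier coefficients of $g$. L\'epingle's variational inequality (in the form cited as \cite[Lemma 3.3]{Bourgain-IHP}) then gives $\big\|(S_j(n))_j\big\|_{v_2}\lesssim \|g\|_2$ in $\ell^2(\Z)$ with the stated logarithmic loss in the number of levels, and since $\sup_j|S_j(n)|\le |S_{j_0}(n)|+\|(S_j(n))_j\|_{v_2}$ for the base level $j_0$, and $|S_{j_0}(n)|$ is itself bounded in $\ell^2$ by $\|g\|_2$ (Plancherel, as it is just a Fourier restriction to an arc), this closes the single-point bound. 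Reassembling the $K$ pieces via Cauchy--Schwarz and collecting the two logarithmic factors yields the $C(\log K)^2$ constant.

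The main obstacle is the honest verification that the dyadic Fourier-localisation operators $g\mapsto\big(\int_{|\alpha|<2^{-j}}e^{2\pi in\alpha}g(\alpha)\,d\alpha\big)_n$ fall within the scope of L\'epingle's inequality with the right quantitative dependence — that is, identifying them with a conditional-expectation (martingale) structure, or else running Bourgain's own oscillation/entropy argument — and tracking the constant's logarithmic dependence on the number of scales so that, after truncating at scale $j\sim s+\log_2 K$, it contributes only one factor of $\log K$ rather than $\log(2^s)$. The disjointness of the arcs at scales $j>s$ is the structural fact that makes the separation hypothesis usable, so I would state and prove that disjointness carefully before invoking anything analytic.
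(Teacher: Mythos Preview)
The paper itself does not prove this lemma; it merely states it, remarks that ``Its proof involve L\'epingle inequalities'', and refers the reader to \cite[Lemma~3.3]{Bourgain-IHP} and \cite{Thouvenot}. So there is no detailed argument in the paper to compare against beyond the indication that L\'epingle's variational inequality is the main tool --- which is indeed the tool you reach for.

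That said, your sketch has a genuine gap in the recombination step. After decomposing $f=\sum_{r=1}^K f_r$ into pieces supported on the disjoint arcs $B(\lambda_r,2^{-s})$ and obtaining a single-point bound $\big\|\sup_{j>s}|S_j^{(r)}|\big\|_{\ell^2(\Z)}\le C\,\|f_r\|_2$, you propose to reassemble ``using Cauchy--Schwarz over the $K$ indices'' and assert that this costs one factor of $\log K$. It does not: from $\sup_j\big|\sum_r S_j^{(r)}(n)\big|\le\sum_r\sup_j|S_j^{(r)}(n)|$ and the triangle inequality in $\ell^2(\Z)$ you obtain $\sum_{r=1}^K C\|f_r\|_2$, and Cauchy--Schwarz in $r$ then gives $C\sqrt{K}\big(\sum_r\|f_r\|_2^2\big)^{1/2}=C\sqrt{K}\,\|f\|_2$, not $C\log K\,\|f\|_2$. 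The disjoint supports do give orthogonality of the $S_j^{(r)}(\cdot)$ in $\ell^2(\Z)$ at each \emph{fixed} scale $j$ (by Parseval), but that orthogonality does not pass through the supremum over $j$ via a bare triangle inequality. Converting $\sqrt{K}$ into $(\log K)^2$ is precisely the non-trivial content of Bourgain's argument in \cite{Bourgain-IHP}: one replaces the sharp cutoffs $\mathbbm{1}_{B(\lambda_r,2^{-j})}$ by the smooth multipliers $\widehat{\chi_{2^j}}(\alpha-\lambda_r)$ (with $\chi_N=\tfrac1N\mathbbm{1}_{[0,N]}$, as in the paper's entropic lemma just below), thereby reducing to a genuine variational inequality for averages on $\Z$, and then exploits the separation of the $\lambda_r$ together with the square-function orthogonality in a considerably more delicate way than your sketch allows.

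Two secondary issues. First, L\'epingle's inequality controls the $r$-variation for $r>2$; the endpoint $r=2$ is false in general, so the phrase ``the $v_2$-variation norm \dots which is exactly the object L\'epingle's inequality estimates'' needs adjustment. Second, the assertion that ``the number of relevant dyadic scales is $O(\log K)$ once one truncates at the trivial scale where $V_j$ has measure comparable to $1/K$'' is unjustified: the supremum in the statement is over all $j>s$ with no upper bound, and you give no argument that scales $j>s+O(\log K)$ contribute negligibly in $\ell^2(\Z)$.
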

As a consequence of L\'epingle Lemma, we have the following lemma needed here \cite[Lemma 3.23]{Bourgain-D}.
\begin{lem}[The entropic $\lambda$-separated Lemma]Let $\lambda_1,\cdots,\lambda_K$ be a $K$ points on the circle such that $|\lambda_i-\lambda_j| \geq \tau$, $\forall i \neq j$. Let $f \in \ell^2(\Z))$, $\chi=\1_{[0,1]}$ and 
	$\chi_N=\frac{1}{N}\1_{[0,N]}$ and consider, for $x \in \Z$,
\begin{align}
	\Gamma_x=\Big\{\int_{0}^{1}\widehat{f}(\lambda) \widehat{\chi_N}(\lambda-\lambda_k\big)_{1 \leq k \leq K}, N (\textrm{dyadic}) > \frac{1}{\tau}\Big\}
\end{align}
as a subset of $K$-dimensional Hilbert space. Then, for any $t>0$
\begin{align}
\int N(\Gamma_x,t)dx \leq\frac{c}{t^2}\big\|f\big\|_2^2,
\end{align}
where, for a subset $A$ of a Hilbert space $H$, $N(A,\epsilon), \epsilon>0$ stand for the metrical entropy numbers, that is, the minimal number of balls of radius $\epsilon$ needed to cover $A$. 
\end{lem}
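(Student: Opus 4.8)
The plan is to reduce the metric-entropy bound to a \emph{variational} estimate for the underlying $\C^K$-valued sequence and then to feed that into L\'epingle's inequality. Write $a_N(x)=\big(a_N(x)_k\big)_{1\le k\le K}\in\C^K$ with $a_N(x)_k=\int_0^1\widehat f(\lambda)\,\widehat{\chi_N}(\lambda-\lambda_k)\,e^{2\pi i x\lambda}\,d\lambda$, so that $\Gamma_x=\{a_N(x):N\text{ dyadic},\ N>1/\tau\}$. Unfolding $\widehat{\chi_N}(\theta)=\tfrac1N\sum_{m=0}^{N-1}e^{2\pi i m\theta}$ shows that, up to a reflection of the variable, $a_N(x)_k$ is just the $\lambda_k$-modulated dyadic ergodic average $\tfrac1N\sum_{m=0}^{N-1}e^{-2\pi i m\lambda_k}f(x+m)$ of $f$ under the shift. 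Each coordinate has $\ell^2(\Z)$-norm equal to $\|f\|_2$, so the point of the lemma is that the $\tau$-separation of the $\lambda_k$ forces the $K$ coordinates to be almost frequency-disjoint, and no factor of $K$ survives.

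\emph{Step 1 (from entropy to variation).} For any sequence $(b_i)$ in a Hilbert space and any $t>0$, a greedy chain argument produces indices $i_1<i_2<\cdots<i_m$ with $\|b_{i_{l+1}}-b_{i_l}\|>t$ for every $l$, while every $b_j$ lies within $t$ of some $b_{i_l}$; hence the $m$ balls of radius $t$ about $b_{i_1},\dots,b_{i_m}$ cover the range, and since $m-1$ disjoint $t$-jumps cost at least $(m-1)t^2$ of $2$-variation we get $N(\{b_i\},t)\le 1+t^{-2}\big\|(b_i)\big\|_{v_2}^2$ (the chain terminates whenever the $v_2$-norm is finite, the only case that matters). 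Applied with $b_i=a_{N_i}(x)$ this gives $N(\Gamma_x,t)\le 1+t^{-2}\big\|(a_N(x))_N\big\|_{v_2}^2$; in the finitary form of the statement ($x$ over a box, or $f$ finitely supported) the constant is absorbed, so everything reduces to proving
\[
\sum_{x\in\Z}\big\|(a_N(x))_N\big\|_{v_2}^2\ \le\ c\,\|f\|_2^2 .
\]

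\emph{Step 2 (the variational estimate).} Choose smooth bumps $0\le\psi_k\le1$ supported in $\{|\lambda-\lambda_k|<\tau/2\}$; by $\tau$-separation these have pairwise disjoint supports, so $f_k:=\F^{-1}(\psi_k\widehat f)$ satisfy $\sum_k\|f_k\|_2^2\le\|f\|_2^2$. Since $N>1/\tau$ is dyadic, $\widehat{\chi_N}$ lives at scale $\le1/N\ll\tau$; trading the sharp average $\chi_N$ for its smoothed de la Vall\'ee-Poussin companion and invoking the rapid tail decay \eqref{SmoothB}, one checks that $a_N(x)_k=e^{2\pi i x\lambda_k}A_Ng_k(x)$, where $g_k(n)=e^{-2\pi i n\lambda_k}f_k(n)$ and $A_N$ is the dyadic shift average, up to an error whose $v_2$-norm summed in $x$ is $\le C\tau^{c}\|f\|_2^2$ (the part of $\widehat f$ far from $\lambda_k$ contributing negligibly to the $k$-th coordinate). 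Then $\big\|(a_N(x))_N\big\|_{v_2}^2\le 2\sum_k\big\|(A_Ng_k(x))_N\big\|_{v_2}^2+(\text{error})$; summing over $x$, using L\'epingle's variational inequality for the dyadic ergodic averages on $\ell^2(\Z)$, namely $\sum_x\big\|(A_Nh(x))_N\big\|_{v_2}^2\le c\|h\|_2^2$, together with $\|g_k\|_2=\|f_k\|_2$ and $\sum_k\|f_k\|_2^2\le\|f\|_2^2$, yields the displayed bound, and hence the lemma.

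The main obstacle is precisely the error analysis in Step 2: making rigorous that the sharp cutoff $\chi_N$ may be replaced by a smooth kernel at acceptable cost and that the coordinates decouple at the \emph{variational} (not merely pointwise-in-$N$) level. The clean fact behind the decoupling is the uniform estimate $\sum_k|\widehat{\chi_N}(\mu-\lambda_k)|^2\le C$, valid for every $\mu$ because at most one $\lambda_k$ lies within $1/N$ of $\mu$ while the remaining ones contribute a convergent tail $\lesssim(N\tau)^{-2}\lesssim1$; promoting this so as to control the differences $A_N-A_{N'}$ via L\'epingle's inequality and the de la Vall\'ee-Poussin tail bounds is the technical heart of the argument.
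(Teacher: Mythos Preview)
The paper does not prove this lemma: it is stated without proof, described as ``a consequence of L\'epingle Lemma,'' and attributed to \cite[Lemma~3.23]{Bourgain-D}. So there is no in-paper argument to compare against, and I assess your sketch on its own.

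Your two-step architecture --- bound the covering number by a variation/jump quantity, then decouple the $K$ coordinates via the $\tau$-separation and apply a L\'epingle-type input to each --- is indeed Bourgain's route. However, Step~2 as written rests on a false assertion. You invoke ``L\'epingle's variational inequality for the dyadic ergodic averages on $\ell^2(\Z)$, namely $\sum_x\|(A_Nh(x))_N\|_{v_2}^2\le c\|h\|_2^2$.'' This is the $r=2$ endpoint of the variational inequality, and it \emph{fails}: L\'epingle's theorem for martingales, and its transference to averaging operators, controls the $r$-variation only for $r>2$; the $r=2$ case is known to break down. What survives at the endpoint --- and what suffices here --- is the \emph{jump inequality}: if $J_t(x)$ denotes the number of $t$-jumps of the sequence, then $t^2\sum_xJ_t(x)\le c\|h\|_2^2$. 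Your greedy chain in Step~1 already gives $N(\Gamma_x,t)\le 1+J_t(x)$ directly (with $J_t$ the $\C^K$-valued jump count), so the detour through $\|\cdot\|_{v_2}$ is both unnecessary and fatal; route the argument through jump counts instead.

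A second point: even after this fix, the decoupling is more delicate than your outline suggests. The na\"ive reduction of a $\C^K$-valued $t$-jump to scalar $(t/\sqrt K)$-jumps in some coordinate costs a factor $K$ after applying the scalar jump inequality, which is precisely the loss the lemma forbids. Bourgain avoids this by exploiting the square-function orthogonality of the multipliers $\widehat{\chi_{N'}}-\widehat{\chi_N}$ together with the $\tau$-separation on the Fourier side, not merely the pointwise bound $\sum_k|\widehat{\chi_N}(\mu-\lambda_k)|^2\le C$. You correctly flag this as ``the technical heart''; be aware that it is not settled by the ingredients you list.
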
 

More details on Lemma \ref{Bourgain-F} will be given in the revised version of \cite{elabdal-poly}.

\noindent We proceed now to the proof of our main result (Theorem \ref{Mainofmain}).
\begin{proof}[\textbf{Proof of Theorem \ref{Mainofmain}}.]Without loss of generality, we assume that the map $T$ is  ergodic. We further assume  that $f,g$ are in $L^\infty(X,\mu)$ with $\big|\big\|g\big\|\big|_{U^2}=0$ (notice that we can interchange the role of $f$ and $g$). Therefore, by Lemma \ref{WW-Gowers} combined with Proposition \ref{Assani-Moore}, for any prime $p \neq q$, we have 
\begin{align}\label{Key-conv}
\frac{1}{N}\sum_{n=1}^{N}z^{(p-q)n} g(T^{-pn}x)g(T^{-qn}x) \tend{N}{+\infty}0, \textrm{~~for~~almost~~all~~}x
\end{align}
We thus get, by Proposition \ref{WWKBSZ}, the following
\begin{align}\label{CM-delta1}
\frac{1}{N}\sum_{n=1}^{N}\bmnu(n) z^{-n}g(T^{-n}x) \tend{N}{+\infty}0, \textrm{~~for~~almost~~all~~}x.
\end{align}

It follows that if $f$ is an eigenfunction, then the convergence  holds.

Now, for $f$ and $g$ in the orthocompelment of eigenfunctions, following Bourgain's approach, we will use the finitary method. Therefore,   by \eqref{CM-delta1}, we can write 
\begin{align}\label{CM-delta2}
\frac{1}{N}\sum_{x-N\leq n<x}\bmnu_x(n) z^{-n}\psi(n) \tend{N}{+\infty}0, 
\end{align}
where $\psi \in \ell^{\infty}(\Z)$ with $\||\psi\||_{U^2}=0.$ We set
$$Q_{x,N,\bmnu}(\theta)=\frac{1}{N}\sum_{x-N\leq n<x}\bmnu_x(n) e^{-2 \pi i n \theta}\psi(n).$$

Therefore, for $\delta_{0}>0,$ there exist $N_0 \in \N$ such that for any interval $I \subset [1,J)$ with $|I|>N_0$ we have
\begin{align}\label{Bdelta}
\Big|\frac{1}{|I|}\sum_{ n \in I}\bmnu_{x_0}(n) e^{-2 \pi i n \theta}\psi(n)\Big|<\delta_0,
\end{align}
where $x_0=\min(I).$ We recall that $\bmnu_{x}$ is defined by 
$$\bmnu_{x}(n)=\bmnu(x-n).$$
Of-course, we extend $\bmnu$ to negative integer in the usual fashion.\\

\noindent We proceed now to the application of the fundamental lemma of Bourgain (Lemma \ref{Bourgain-F}). We stress that this is not a direct application of the lemma. In fact, one may need to check that the condition ((3.6) to (3.9)) of Lemma 3.5 from \cite{Bourgain-D} are satisfied. For that and the convenience of the reader, we recall first Lemma 3.5 from  \cite{Bourgain-D}.
\begin{lem}[Lemma 3.5 in \cite{Bourgain-D}]
	Let $0<\delta<1$, $N>1$ and $\Omega$ on $\Z \times [0,1)$ satisfying the condition
	\begin{enumerate}
		\item \label{BC1}$|\Omega|<\delta,$ 
		\item \label{BC2} $\big\|\Omega(x,.)\big\|_2 <N^{-1/2}.$
		\item \label{BC3} $\big\|\Omega(x,.)-\Omega(x',.)\big\|_2 < \sqrt{\frac{|x-x'|}{N}}.$ \footnote{In \cite{Bourgain-D}, there is a misprint and the argument used in the proof apply \eqref{BC3}.}	
		\item \label{BC4} $\big\|\partial_{\theta}\Big(e^{2 \pi i x \theta}\Omega(x,\theta)\Big)\big\|_2 <N^{1/2}.$
	\end{enumerate} 
	Let $0<\eta<1$ be a function on $[0,1)$ satisfying 
	$$|\eta'(\lambda)|< K.N.$$ 
	Let $f$ be $1$-bounded finitely supported function on $\Z$  and define
	$$F(x)=\ds \int \widehat{f}(\lambda)\Omega(x,\lambda) \eta(\lambda) e^{4 \pi i x \lambda} d\lambda,$$
	Let $t> K^{1/2}.N^{-1/4}$.  There is the estimate 
	\begin{align}
	\Big|\Big\{x \in \Z | |F(x)|>t\Big \}\Big| <c K^3 t^{-8} \delta^2 \int |\widehat{f}(\lambda)|^2 |\eta(\lambda)|^2 d\lambda.
	\end{align}
\end{lem}

Let us now notice that, by \eqref{CM-delta2}, the condition \eqref{BC1} is fulfilled, and a straightforward computation yields that $\big\|Q_{x,N,\bmnu}\|_2$ is less than $N^{-1/2}$ since $\bmnu$ and $\psi$ are $1$-bounded. We further have that the condition \eqref{BC2} and \eqref{BC3} are fulfilled, since  the map $z \mapsto z \sigma_\delta(|z|)$ is a Lipschitz function and the constant is independent of $\delta$. Indeed,  
\begin{align}\label{eq38}
	&\Big\|e^{2\pi i x \theta} Q_{x,N,\bmnu}(\theta)-e^{2\pi i x'\theta}Q_{x',N,\bmnu}(\theta)\Big\|_2^2\notag\\
	&=\Big\|\frac{1}{N}\sum_{n=1}^{N}\bmnu(n)\psi(x-n)e^{2 \pi i n\theta }-\frac{1}{N}\sum_{n=1}^{N}\bmnu(n)\psi(x'-n)e^{2 \pi i n \theta}\Big\|_2^2 \notag\\
	&=\frac{1}{N^2}\sum_{n=1}^{N}|\bmnu(n)|^2 \big|\psi(x-n)-\psi(x'-n)\big|^2\\		
	&\leq \frac{1}{N^2}\sum_{n=1}^{N} \big|\psi(x-n)-\psi(x'-n)\big|^2 \label{key-b-un}\\ &=\Big\|e^{2\pi i x} Q_{x,N,\bun}(\theta)-e^{2\pi i x'}Q_{x',N,\bun}(\theta)\Big\|_2^2\\\notag
\end{align}
The  inequality \eqref{key-b-un} follows from the $1$-boundness of $\bmnu$.  We further have
\begin{align}\label{eq38bis}
&\frac{1}{N^2}\sum_{n=1}^{N} \big|\psi(x-n)-\psi(x'-n)\big|^2\notag \\
&=\Big\|\frac{1}{N}\sum_{n=1}^{N}\psi(x-n)e^{2 \pi i n\theta}-\frac{1}{N}\sum_{n=1}^{N}\psi(x'-n)e^{2 \pi i n \theta}\Big\|_2^2
\notag \\
&=\Big\|\frac{1}{N}\sum_{n=1}^{N}\psi(n)e^{2 \pi i n\theta}-\frac{1}{N}\sum_{n=1}^{N}\psi(x'-n)e^{2 \pi i n \theta}\Big\|_2^2
\notag \\
&=\Big\|e^{2\pi i x} Q_{x,N,\bun}(\theta)-e^{2\pi i x'}Q_{x',N,\bun}(\theta)\Big\|_2^2.
\end{align}
Now, we write  
\begin{align}\label{eq39bis}
&\Big\|e^{2\pi i x} Q_{x,N,\bun}(\theta)-e^{2\pi i x'}Q_{x',N,\bun}(\theta)\Big\|_2
\\
&= \Big(e^{2\pi i x\theta}-e^{2\pi i x'\theta}\Big) Q_{x,N,\bun}(\theta)+e^{2\pi i x'} \Big(Q_{x,N,\bun}(\theta)-Q_{x',N,\bun}(\theta)\Big)\Big\|_2 \\
&\leq I+II,
\end{align}
where
$$I=\Big\|\Big(e^{2\pi i x\theta}-e^{2\pi i x'\theta}\Big) Q_{x,N,\bun}(\theta)\Big\|_2,$$
and 
$$II= \Big\|e^{2\pi i x'} \Big(Q_{x,N,\bun}(\theta)-Q_{x',N,\bun}(\theta)\Big) \Big\|_2$$
Applying the following classical inequality
$$\Big|\frac{e^{2\pi i x}-e^{2\pi i x'}}{2}\Big|\leq 
 \Big|\frac{e^{2\pi i x}-e^{2\pi i x'}}{2}\Big|^\frac12 \leq
 C|x-x'|^\frac{1}{2},
$$
we obtain
$$I^2 \leq C. \frac{|x-x'|}{N}.$$
We further have
\begin{align}\label{eq40bis} 
&\Big\|e^{2\pi i x'} \Big(Q_{x,N,\bun}(\theta)-Q_{x',N,\bun}(\theta)\Big) \Big\|_2^2\\
&=\Big\|\frac{1}{N}\sum_{x-N<n<x}\psi(n)e^{2 \pi i n}-\frac{1}{N}\sum_{x'-N<n<x'}\psi(n)e^{2 \pi i n}\Big\|_2^2
\\
&\leq \frac{|x-x'|}{N}.
\end{align}
At this we point out that for the application of Bourgain Lemma ((Lemma \ref{Bourgain-F})), we need to renormalize the polynomials $Q_{x,N,\bmnu}$ by the constant $(1+C)/2$.  \\
It is still to check that the condition \eqref{BC4} is fulfilled. This can be done, by applying Bernstein-Zygmund inequalities \cite[Theorem 3.13, Chapter X, p. 11]{Zygmund} combined with the $1$-boundness of $\bmnu$ and $\psi$. We thus get 
\begin{align}\label{eq39}
&\Big\|\partial_{\theta}\Big(e^{2 \pi i x \theta }Q_{x,N,\bun}(\theta)\Big)\Big\|_2
\leq N.\Big\|\frac{1}{N}\sum_{n=1}^{N}\bmnu(n)\psi(x-n)e^{2 \pi i n\theta }\Big\|_2
<\sqrt{N}
\end{align}

\noindent Following the path of Bourgain's  proof, the rest of the proof is accomplished by applying the LP lemma (Lemma \ref{Partition}) (see also \cite[Lemma 5.1]{Bourgain-D}) and Lemma \ref{lambda-B}  (see also \cite[Lemma 3.23]{Bourgain-D} and \cite[Lemma 4.1]{Bourgain-IHP}  )). We thus  get, for any $\phi \in \ell^\infty(\Z)$, 
\begin{align*}
&\frac{1}{J}\Bigg|\Big\{x \in [1,J]~~:~~ \notag 
\max_{\overset{N_0<N<N_1, }{N (\textrm{dyadic})>\frac{1}{\delta}}}\Big|\frac{1}{N}\sum_{n=1}^{N}\bmnu(n)\phi(n+x)\psi(n-x)\Big| >\delta^{\frac{1}{10^9}}\Big\}\Bigg|\\&< C\delta^{\frac{1}{10^6}}.
\end{align*}
Since, by Fourier transform transfer, we have
\begin{align*}
&\frac{1}{N}\sum_{n=1}^{N}\bmnu(n)\phi(n)\psi(n)\\
&=\int_{0}^{1} \widehat{\phi}(\theta) \Big(\frac{1}{N}\sum_{n=1}^{N} \bmnu(n)  e^{-2 \pi i (x-n) \theta}\psi(x-n)\Big) e^{4 i \pi x \theta} d\theta.
\end{align*}

Moreover, we have
\begin{align}\label{eq217}
&\Big|\int_{0}^{1} \widehat{\phi}(\theta) \Big(\frac{1}{N}\sum_{n=1}^{N} \bmnu(n)  e^{-2 \pi i (x-n) \theta}\psi(x-n)\Big) e^{4 i \pi x \theta} d\theta\Big|\notag \\
&\leq \sum_{ \overset{\delta_{0}>\delta> N_x^{-1}}{\textrm{dyadic}}}
\Big|\int_{0}^{1} \widehat{\phi}(\theta) \Big(Q_{x,N}(\theta)\sigma_\delta(\big|Q_{x,N}(\theta)\big|)\Big) \Big) e^{4 i \pi x \theta} d\theta\Big|+ \frac{C}{\sqrt[4]{N_x}} .
\end{align}
This is due to the following fact
\begin{align}\label{eq-B2to3}
&\Big|\int_{0}^{1} \widehat{\phi}(\theta) \Big(\frac{1}{N}\sum_{n=1}^{N} \bmnu(n)  e^{-2 \pi i (x-n) \theta}\psi(x-n)(\theta)\Big) e^{4 i \pi x \theta} d\theta\Big| \notag\\
&=\Big|\int_{0}^{1} \widehat{\phi}(\theta) \Big(\sum_{\overset{0<\delta<1}{\textrm{~~dyadic~~}}} Q_{x,N}(\theta)\sigma_\delta(|Q_{x,N}(\theta)|) 
\Big) e^{4 i \pi x \theta} d\theta\Big|\\
&\leq \sum_{ \overset{\delta_{0}>\delta> N_x^{-1}}{\textrm{dyadic}}}
\Big|\int_{0}^{1} \widehat{\phi}(\theta) \Big(Q_{x,N}(\theta)\sigma_\delta(\big|Q_{x,N}(\theta)\big|)\Big) \Big) e^{4 i \pi x \theta} d\theta\Big|+ 
\Big|\sum_{ \overset{\delta \not \in [\frac{1}{N_x},\delta_{0}]}{\textrm{dyadic}}}\cdots\Big|.\\
&\leq  \sum_{ \overset{\delta_{0}>\delta> N_x^{-1}}{\textrm{dyadic}}}
\Big|\int_{0}^{1} \widehat{\phi}(\theta) \Big(Q_{x,N}(\theta)\sigma_\delta(\big|Q_{x,N}(\theta)\big|)\Big) \Big) e^{4 i \pi x \theta} d\theta\Big|+ \frac{C}{\sqrt[4]{N_x}}.
\end{align}
The last inequality can be obtained by applying Lemma \ref{BW} to the functions
$\ds \varphi(\theta)=\sum_{ \overset{\delta \not \in [\frac{1}{N_x},\delta_{0}]}{\textrm{dyadic}}}Q_{x,N}(\theta) \sigma_\delta(\big|Q_{x,N}(\theta)\big|)$ and $\phi$. We further notice  that 
$$\Big\|Q_{x,N}\Big\|_2\leq \frac{1}{\sqrt{N_x}}$$
and 
$$\big\|\phi\big\|_{\infty} \leq 1.$$

\noindent{}To finish the proof, we need to point out that any bounded aperiodic mutilplicative function is statistically orthogonal to any nilsequence, by the generalized Daboussi-Delange theorem \cite[Theorem 2.5 ]{Host-N}. We further notice that for any $f,g \in L^2(X,\mu)$, and any $\varepsilon>0$, there exist $f_1,g_1 \in L^{\infty}(X,\mu)$ such that
	$$\Big\|f-f_1\Big\|_2 < \sqrt{\varepsilon}, \textrm{~~and~~} \Big\|g-g_1\Big\|_2 < \sqrt{\varepsilon}.$$
	Moreover, by Cauchy-Schwarz inequality, we have
	\begin{eqnarray*}
		&&\Big|\frac1{N}\sum_{n=1}^{N}\bmnu(n)(f-f_1)(T^{n}x)(g-g_1)(T^{-n}x) \Big|\\
		&\leq&  \frac1{N}\sum_{n=1}^{N}\big|(f-f_1)(T^{n}x)\big|\big|(g-g_1)(T^{-n}x)\big|\\
		&\leq& \Big(\frac1{N}\sum_{n=1}^{N}\big|(f-f_1)(T^{n}x)\big|^2\Big)^{\frac12}
		\Big(\frac1{N}\sum_{n=1}^{N}\big|(g-g_1)(T^{-n}x)\big|^2\Big)^{\frac12}
	\end{eqnarray*}
	Applying the ergodic theorem, it follows that for almost all $x \in X$, we have
	\[
	\limsup_{N \longrightarrow +\infty}\Big|\frac1{N}\sum_{n=1}^{N}\bmnu(n)(f-f_1)(T^{n}x)(g-g_1)(T^{-n}x) \Big|<  \varepsilon.
	\]
	Whence, we can write
	\begin{eqnarray*}
		&&\limsup_{N \longrightarrow +\infty}\Big|\frac1{N}\sum_{n=1}^{N}\bmnu(n)f(T^{n}x)g(T^{-n}x) \Big|\\
		&\leq&
		\limsup_{N \longrightarrow +\infty}\Big|\frac1{N}\sum_{n=1}^{N}
		\bmnu(n)f_1(T^{n}x)g(T^{-n}x)\Big|\\
		&+&
		\limsup_{N \longrightarrow +\infty}\Big|\frac1{N}\sum_{n=1}^{N}\bmnu(n)f(T^{n}x)g_1(T^{-n}x)\Big|\\
		&+&\limsup_{N \longrightarrow +\infty}\Big|\frac1{N}\sum_{n=1}^{N}\bmnu(n)f_1(T^{n}x)g_1(T^{-n}x)\Big|\\
		&\leq& \varepsilon+\limsup_{N \longrightarrow +\infty}\Big|\frac1{N}\sum_{n=1}^{N}
		\bmnu(n)f_1(T^{n}x)g(T^{-n}x)\Big|\\
		&+&
		\limsup_{N \longrightarrow +\infty}\Big|\frac1{N}\sum_{n=1}^{N}\bmnu(n)f(T^{n}x)g_1(T^{-n}x)\Big|.
	\end{eqnarray*}
	We thus need to estimate
	$$\limsup_{N \longrightarrow +\infty}\Big|\frac1{N}\sum_{n=1}^{N}
	\bmnu(n)f_1(T^{n}x)g(T^{-n}x)\Big|,$$
	and
	$$
	\limsup_{N \longrightarrow +\infty}\Big|\frac1{N}\sum_{n=1}^{N}\bmnu(n)f(T^{n}x)g_1(T^{-n}x)\Big|.$$
	In the same manner we can see that
	\begin{eqnarray*}
		&&\limsup_{N \longrightarrow +\infty}\Big|\frac1{N}\sum_{n=1}^{N}
		\bmnu(n)f_1(T^nx)(g-g_1)(T^{-n}x)\Big|\\
		&\leq& \limsup_{N \longrightarrow +\infty}\Big(\frac1{N}\sum_{n=1}^{N}|f_1(T^{n}x)|^2\Big)^{\frac12}
		\limsup_{N \longrightarrow +\infty}\Big(\frac1{N}\sum_{n=1}^{N}|(g-g_1)(T^{-n}x)|^2\Big)^{\frac12}\\
		&\leq& \big\|f_1\big\|_2 \big\|g-g_1\big\|_2\\
		&\leq& \Big(\big\|f\big\|_2+\sqrt{\varepsilon}\Big). \sqrt{\varepsilon}
	\end{eqnarray*}
	This gives
	\begin{eqnarray*}
		&&\limsup_{N \longrightarrow +\infty}\Big|\frac1{N}\sum_{n=1}^{N}
		\bmnu(n)f_1(T^{n}x)g(T^{-n}x)\Big|\\
		&\leq& \Big(\Big\|f\Big\|_2+\sqrt{\varepsilon}\Big). \sqrt{\varepsilon}+
		\limsup_{N \longrightarrow +\infty}\Big|\frac1{N}\sum_{n=1}^{N}
		\bmnu(n)f_1(T^{n}x)g_1(T^{-n}x)\Big|\\
		&\leq& \Big(\big\|f\big\|_2+\sqrt{\varepsilon}\Big). \sqrt{\varepsilon}+0
	\end{eqnarray*}
	Summarizing, we obtain the following estimates
	\begin{eqnarray*}
		&&\limsup_{N \longrightarrow +\infty}\Big|\frac1{N}\sum_{n=1}^{N}\bmnu(n)f(T^{n}x)g(T^{-n}x) \Big|\\
		&\leq& \varepsilon+\Big(\big\|f\big\|_2+\sqrt{\varepsilon}\Big). \sqrt{\varepsilon}+\Big(\big\|g\big\|_2+\sqrt{\varepsilon}\Big). \sqrt{\varepsilon}
	\end{eqnarray*}
	Since $\varepsilon>0$ is arbitrary, we conclude that for almost every $x \in X$,
	\[
	\frac1{N}\sum_{n=1}^{N}\bmnu(n)f(T^{n}x)g(T^{-n}x) \tend{N}{+\infty}0.
	\]
	This complete the proof of the theorem.
\end{proof}
It is noticed in \cite{IJL} that the convergence almost sure holds for the short interval can be obtained for the Liouville and M\"{o}bius functions by applying the following Zhan's estimation \cite{Ztao}: for each $A>0$, for any $\varepsilon>0$, we have
\begin{equation}\label{Zhantao}
\max_{z \in \T}\left|\displaystyle\sum_{ N \leq n \leq N+M}z^n\bml(n)\right|\leq C_{A,\varepsilon}\frac{M}{\log^{A}(M)}\text{ for some }C_{A,\varepsilon}>0,
\end{equation}
provided that $M \geq N^{\frac58+\varepsilon}$.  Here, 
\begin{xques}we ask on the convergence almost sure in the short interval for the bilinear ergodic bilinear averages 
with bounded aperiodic multiplicative weight.
\end{xques}
\begin{rem}In the forthcoming revised version of \cite{elabdal-poly}. The author will present  the strategy of the proof of Bourgain double ergodic theorem and its adaptation to prove the polynomials Bourgain bilinear ergodic
	theorem as it is stated in that paper.
	\footnote{The paper \cite{elabdal-poly} was posted on Arxiv on 5 August 2019, and
	 very recently (3 August 2020), the authors in \cite{TMT} provided a proof of a partial Bourgain bilinear ergodic theorem for polynomials  for $\sigma$-finite measure space. It is seems that therein, the authors extended some ideas introduced by the author based on the  Cald\'eron principal and Bourgain's method \cite{Bourgain-IHP} combined with the duality argument due to F. Riesz (precisely, $\ell^1$-$\ell^\infty$-duality) and the result of Peluse-Pendiville in additive combinatorics with some tools from Adelic harmonic analysis. In the revised version \cite{elabdal-poly}, the author will take into account their remarks.} 
\end{rem}
\begin{thank}
 	The author wishes to thanks Terrance Tao for his comments and suggestions and for pointing out a gap in the previous version of the proof of Theorem 5.1.
 \end{thank}

\end{document}